\newtheorem{theorem}{Theorem}[section]
\newtheorem{lemma}[theorem]{Lemma}
\newtheorem{definition}[theorem]{Definition}
\newtheorem{example}[theorem]{Example}
\newtheorem{proposition}[theorem]{Proposition}
\newtheorem{corollary}[theorem]{Corollary}
\newtheorem{conjecture}[theorem]{Conjecture}
\newtheorem{List}[theorem]{List}
\theoremstyle{definition}
\newtheorem{remark}[theorem]{Remark}
\newcommand{\Z}{\mathbb{Z}}
\newcommand{\blank}{\rule{.3cm}{0.15mm}}
\newcommand{\dslash}{/\mkern-5mu/}
\newcommand{\dbslash}{\backslash\mkern-5mu\backslash}
\DeclareMathOperator{\Hom}{Hom}
\newcommand{\Mlt}{\mathop{\mathrm{Mlt}}}
\newcommand{\Sym}{\mathop{\mathrm{Sym}}}
\begin{document}


\title[Homology of quasigroups of Bol-Moufang type]{In search of homology for quasigroups of Bol-Moufang type}
\author[Christiana]{Anthony Christiana$^1$}
\address{$^1$Department of Mathematics, The George Washington University, \newline \indent Washington DC, USA}
\email{{\rm $^1$ajchristiana@gwmail.gwu.edu}}

\author[Clingenpeel]{Ben Clingenpeel$^2$}
\address{$^2$Department of Mathematics, The George Washington University,\newline \indent Washington DC, USA}
\email{{\rm $^2$ben.clingenpeel@gwu.edu}}

\author[Guo]{Huizheng Guo$^3$} 
\address{$^3$Department of Mathematics, The George Washington University, \newline \indent Washington DC, USA}
\email{{\rm $^3$hguo30@gwu.edu}}
\author[Oh]{Jinseok Oh$^4$}
\address{$^4$Department of Mathematics, Kyungpook National University, Daegu, \newline \indent South Korea}
\email{{\rm $^4$jinseokoh@knu.ac.kr}}
\author[Przytycki]{J\'{o}zef H. Przytycki$^5$}
\address{$^5$Department of Mathematics, The George Washington University,\newline \indent Washington DC, USA and \newline \indent Department of Mathematics, University of Gda\'{n}sk, Gda\'{n}sk, Poland}
\email{{\rm $^5$przytyck@gwu.edu}}

\author[Zamojska-Dzienio]{Anna Zamojska-Dzienio$^6$}
\address{$^6$Faculty of Mathematics and Information Science\\ Warsaw
University of\newline \indent Technology, 00-661 Warsaw, Poland}
\email{$^6$anna.zamojska@pw.edu.pl}
\keywords{quasigroups of Bol-Moufang type, affine quasigroups, central quasigroups, homology, extensions}
\subjclass[2020]{20N05, 57K10}
\begin{abstract}
    We initiate (co)homology theory for quasigroups of Bol-Moufang type based on analysis of their extensions by affine quasigroups of the same type. We use these extensions to define second and third boundary operations, $\partial_2(x,y)$ and $\partial_3(x,y,z)$, respectively. We use these definitions to compute the second homology groups for several examples from the work of Phillips and Vojtechovsky. We speculate about the relation between these homology groups and those obtained from a small category with coefficients in a functor.
\end{abstract}
\maketitle
\tableofcontents

\section{Introduction}

The goal of our paper is to  initiate a (co)homology theory for quasigroups of Bol-Moufang (BMq) type \cite{Bo,Mo,PhVo1}. Our approach, which has its roots in the work of Eilenberg and his coauthors \cite{Eil,EM,CE}, is to analyze extensions of a quasigroup $(X,*_X)$ by an affine quasigroup $(A,*_A)$ of the same type. We study these extensions not to classify them, but to have a first glimpse at the homology of BMq quasigroups via their second and third boundary operations, $\partial_2(x,y)$ and $\partial_3(x,y,z)$ respectively. We compute the second homology groups for all \emph{distinguishing examples} of BMq described by Phillips and Vojtechovsky \cite{PhVo1}.

In Section~\ref{S:QBM} we recall the definition of quasigroups and provide the foundation for a discussion of the classification results of Phillips and Vojtechovsky \cite{PhVo1}. We then define one-sided loops and affine quasigroups.
\color{black}

In Section \ref{S:EH} we take cues from \cite{Eil} in constructing an extension of a quasigroup $(X,*_X)$ by an affine quasigroup $(A,*_A)$  over an abelian group $(A,+)$ (that is $a*_Ab=ta+sb+c_0$) and derive boundary maps. We then discuss rooted binary trees related to the bracketing of algebraic expressions (words). We also list, in Subsection  \ref{sec:substitutions}, affine BMqs for each of the 26 BMq varieties described in \cite{PhVo1} and notice that in all of them $t$ and $s$ commute, assuming there are no zero divisors. Furthermore, $t=s=1$ is always one of the solutions (see List \ref{list}). In so doing we confirm for example that left loops necessarily have $s=1$ and right loops $t=1$.

In Section \ref{S:chain}, we verify that our boundary maps $\partial_2, \partial_3$ lead to a chain complex. Furthermore, we discuss the relation between the first homology and the `abelianization' of a BMq. This generalizes the fact that the first homology of a group is its abelianization.

In Section \ref{S:examples}, we compute the first and second homology of the BMq's which \cite{PhVo1, PhVo2} used to distinguish the 26 distinct varieties. We observe in our examples that the second homology of a BMq depends only on the variety of quasigroups and not on the choice of defining relation.

In Section \ref{S:summary}, we summarize our results and propose directions for future research. In particular, we speculate about building higher degree (co)homology for BMq, with one approach being to use the homology theory of small categories with coefficients in a functor to $k$-modules.

\section{Quasigroups of Bol-Moufang Type}\label{S:QBM} 

A binary algebra $(Q,\cdot)$ is a \emph{quasigroup} if given any two of $a,b,c$ as elements in $Q$ the third can be uniquely selected in $Q$ so that $a\cdot b=c$. A \emph{loop} is a quasigroup with a neutral element, so it can be understood as a \emph{not necessarily associative} group. However, it is worth emphasizing that quasigroup theory is not just a generalization of group theory but a discipline of its own with almost one hundred years of history as summarized in e.g. \cite{Pfl1}. This theory formally begins with two papers: \emph{Zur Struktur von Alternativkoerpern} by Ruth Moufang (1935) \cite{Mo}, and \emph{Gewebe und Gruppen} by Gerrit Bol (1937) \cite{Bo}, where they introduced algebraic structures known now as \emph{Moufang loops} and \emph{Bol loops}. For details and comprehensive references on quasigroup theory, see \cite{Pfl2}, \cite{Smi2} or \cite{Shch}.

Let $(M,\cdot)$ be a binary algebra and let $m$ be any fixed element in $M$. One considers \emph{translation maps} $L(m)\colon M\to M$ and $R(m)\colon M\to M$ defined by
\begin{equation*}
L(m)(x)=m\cdot x \;\;\text{ and }  \;\;  R(m)(x)=x\cdot m.
\end{equation*}
These maps can be now used to obtain the equivalent definition of a quasigroup as a binary algebra $(Q,\cdot)$ in which all translations are bijections. In particular, one finds that a quasigroup satisfies the \emph{cancellation laws}, that is, $x=y$ whenever $a,x,y\in Q$ with $a\cdot x=a\cdot y$ or $x\cdot a=y\cdot a$.

The properties of translations in a quasigroup $(Q,\cdot)$ allows us to study its \emph{(combinatorial) multiplication group}, denoted by $\Mlt{Q}$. This is a subgroup of a symmetry group $\Sym{Q}$ of a set $Q$ generated by 
\begin{equation*}
 \{L(q),R(q)\colon q\in Q\}. 
\end{equation*}


However, none of the definitions of a quasigroup presented so far can be written in terms of \emph{identities} (universally quantified formulas). The following description is necessary if one wants to work with equational reasoning software, as in \cite{PhVo1}. This is the reason why one introduces the third type of definition, this time as a universal algebra with three binary operations: multiplication $\cdot$, left division $\backslash$ and right division $/$. And since in this paper we rely heavily on results obtained in \cite{PhVo1}, we follow their \emph{universal algebraic} approach.

\subsection{Varieties of Quasigroups}
\subsubsection{Universal algebraic definition of a quasigroup}
\begin{definition}\label{Def:Qugp}
Let $Q$ be a set, and consider three binary operations on $Q$: \textit{multiplication} denoted by $\cdot$, $/$ for \textit{right division}, and $\backslash$ for \textit{left division}.  Consider also the following identities in these operations:
\begin{align*}
\text{(IL)}\phantom{A} y\backslash(y\cdot x)=x, && \text{(IR)}\phantom{A}x= (x\cdot y)/ y,\\
\text{(SL)}\phantom{A} y\cdot(y\backslash x)=x, && \text{(SR)}\phantom{A}x= (x/ y)\cdot y.
\end{align*}

\noindent If (IL) and (SL) hold, then $(Q, \cdot, \backslash)$ is a \emph{left quasigroup}.  If (IR) and (SR) hold, then $(Q, \cdot, /)$ is a \emph{right quasigroup}.  If all four identities hold, then $(Q, \cdot, \backslash, /)$ is a \emph{quasigroup}.\footnote{In the theory of racks and quandles we work with a right quasigroup and the operation $/$ is often denoted by $\bar \cdot$. 
}
\end{definition}
The \emph{variety of quasigroups} consists of all (universal) algebras with three binary operations that satisfy the identities (IL), (SL), (IR), (SR). According to Birkhoff's HSP theorem, a variety of quasigroups is then closed under homomorphic images, subalgebras, and (direct) products. 
For every set $X$, the variety contains a free algebra on $X$, and every algebra in a variety is a homomorphic image of a free algebra. For more information on universal algebra results and tools one should consult \cite{Berg} or \cite{Ber}.

Note that $y\backslash x$ is the unique solution to the equation $y\cdot z=x$, and similarly $x/y$ is the unique solution to the equation $z\cdot y=x$. In fact, the identities (IL), (SL) impose that each left translation is a bijection, and (IR), (SR) work similarly for right translations. We have then $L(y)^{-1}(x)=y\backslash x$ and $R(y)^{-1}(x)=x/y$. In particular, all three definitions of a quasigroup we presented here are equivalent. 

\subsubsection{Bol-Moufang identities}\label{Ss:BMid} 
Here we recall notation introduced in \cite{PhVo1} to keep our paper as self-contained as possible, but for all undefined notions or details one should consult the original reference.

Bol-Moufang Quasigroups are quasigroups defined equationally by so-called Bol-Moufang Identities. These identities equate a four element word in a quasigroup to itself under two different bracketings.

In what follows, we consider only words of length four created with respect to the operation $\cdot$ of multiplication and proper bracketing. Let $X=\{x_1<x_2<x_3<x_4\}$ be a set of ordered letters (variables). There are exactly 5 ways in which a word of length 4 can be bracketed:
\begin{table}    
\begin{center}
\begin{tabular}{c|c}
$1$ & $o(o(oo))$\\
$2$ & $o((oo)o)$\\
$3$ & $(oo)(oo)$\\
$4$ & $(o(oo))o$\\
$5$ & $((oo)o)o$
\end{tabular}
\end{center}
\caption{Ways of bracketing}\label{T:br}
\end{table}
One can now consider rooted binary trees with $4$ leaves enumerated with $x_1,x_2,x_3,x_4$ corresponding to these bracketings and obtain the following 5 words (see Figures \ref{F:bracket_1}-\ref{F:bracket_5}). Note we omit $\cdot$ while multiplying elements.

Moufang quasigroups satisfy an additional identity (within the variety of all quasigroups): 
\begin{equation*}
(xy)(zx)=(x(yz))x.
\end{equation*}
They form a subvariety of varieties of left Bol quasigroups: $x(y(xz))=(x(yx))z$ and of right Bol quasigroups: $x((yz)y)=((xy)z)y$. As we could see all these identities share common features:
\begin{enumerate}
\item the only operation involved is $\cdot$,
\item the same 3 variables appear on both sides, in the same order,
\item one of the variables appears twice on both sides,
\item the remaining two variables appear once on both sides.
\end{enumerate}
Identities $\alpha=\beta$ satisfying conditions 1.-4. mentioned above will be called of \emph{Bol-Moufang type}. A variety of quasigroups is of \emph{Bol-Moufang type} if it is defined by one additional identity of Bol-Moufang type (within the variety of all quasigroups).

Let $x,y,z$ be the variables appearing in an identity of Bol-Moufang type and assume that they appear in alphabetical order. There are exactly 6 ways in which the 3 variables can form a word of length 4:

\begin{table}[hbt]    
\begin{center}
\begin{tabular}{c|c}
    & $1\,2\,3\,4$ \\
    \hline
    $A$ & $x\,x\,y\,z$ \\
    $B$ & $x\,y\,x\,z$ \\
    $C$ & $x\,y\,y\,z$ \\
    $D$ & $x\,y\,z\,x$ \\
    $E$ & $x\,y\,z\,y$ \\
    $F$ & $x\,y\,z\,z$ \\

\end{tabular}
\end{center}
\caption{ Ordering of variables in identities of Bol-Moufang type}\label{T:id}
\end{table}

\setlength{\unitlength}{1mm}
\begin{figure}[hbt]
\begin{minipage}[b]{0.4\textwidth}
\begin{picture}(50,34)(-10,0)
\put(2,20){\makebox(0,0){$x_1$}} \put(4,22){\circle*{1}}
\put(11,11){\makebox(0,0){$x_2$}} \put(13,13){\circle*{1}}
\put(20,2){\makebox(0,0){$x_3$}} \put(22,4){\circle*{1}}
\put(42,2){\makebox(0,0){$x_4$}} \put(40,4){\circle*{1}}
\put(35,16){\makebox(0,0){$x_3x_4$}} \put(31,13){\circle*{1}}
\put(29,25){\makebox(0,0){$x_2(x_3x_4)$}} \put(22,22){\circle*{1}}
\put(11,34){\makebox(0,0){$x_1(x_2(x_3x_4))$}} \put(13,31){\circle*{1}}
\put(4,22){\line(1,1){9}} \put(7,10){\makebox(0,0){$ $}}
\put(13,13){\line(1,1){9}} \put(25,28){\makebox(0,0){$ $}}
\put(22,4){\line(1,1){9}} \put(37,28){\makebox(0,0){$ $}}
\put(40,4){\line(-1,1){9}} \put(19,10){\makebox(0,0){$ $}}
\put(31,13){\line(-1,1){9}} \put(16,19){\makebox(0,0){$ $}}
\put(22,22){\line(-1,1){9}} \put(28,19){\makebox(0,0){$ $}}
\end{picture}
\caption{Word $x_1(x_2(x_3x_4))$.}\label{F:bracket_1}
\end{minipage}
\hfill
\begin{minipage}[b]{0.4\textwidth}
\begin{picture}(40,34)(0,0)
\put(2,2){\makebox(0,0){$x_2$}} \put(4,4){\circle*{1}}
\put(24,2){\makebox(0,0){$x_3$}} \put(22,4){\circle*{1}}
\put(8,14){\makebox(0,0){$x_2x_3$}} \put(13,13){\circle*{1}}
\put(33,11){\makebox(0,0){$x_4$}} \put(31,13){\circle*{1}}
\put(28,25){\makebox(0,0){$(x_2x_3)x_4$}} \put(22,22){\circle*{1}}
\put(2,20){\makebox(0,0){$x_1$}} \put(4,22){\circle*{1}}
\put(12,34){\makebox(0,0){$x_1((x_2x_3)x_4)$}} \put(13,31){\circle*{1}}
\put(4,4){\line(1,1){9}} \put(7,10){\makebox(0,0){$ $}}
\put(4,22){\line(1,1){9}} \put(7,28){\makebox(0,0){$ $}}
\put(22,4){\line(-1,1){9}} \put(19,10){\makebox(0,0){$ $}}
\put(13,31){\line(1,-1){9}} \put(28,19){\makebox(0,0){$ $}}
\put(13,13){\line(1,1){9}} \put(16,19){\makebox(0,0){$ $}}
\put(22,22){\line(1,-1){9}} \put(19,28){\makebox(0,0){$ $}}
\end{picture}
\caption{Word $x_1((x_2x_3)x_4)$.}\label{F:bracket_2}
\end{minipage}
\end{figure}
\setlength{\unitlength}{1mm}
\begin{figure}[hbt]
    \centering
 \begin{picture}(70,30)(0,0)
\put(2,2){\makebox(0,0){$x_1$}} \put(4,4){\circle*{1}}
\put(26,2){\makebox(0,0){$x_2$}} \put(24,4){\circle*{1}}
\put(9,15){\makebox(0,0){$x_1x_2$}} \put(14,14){\circle*{1}}
\put(32,2){\makebox(0,0){$x_3$}} \put(34,4){\circle*{1}}
\put(56,2){\makebox(0,0){$x_4$}} \put(54,4){\circle*{1}}
\put(48,15){\makebox(0,0){$x_3x_4$}} \put(44,14){\circle*{1}}
\put(29,27){\makebox(0,0){$(x_1x_2)(x_3x_4)$}} \put(29,24){\circle*{1}}
\put(4,4){\line(1,1){10}} \put(7,10){\makebox(0,0){$ $}}
\put(14,14){\line(3,2){15}} \put(7,28){\makebox(0,0){$ $}}
\put(24,4){\line(-1,1){10}} \put(19,10){\makebox(0,0){$ $}}
\put(34,4){\line(1,1){10}} \put(28,19){\makebox(0,0){$ $}}
\put(54,4){\line(-1,1){10}} \put(16,19){\makebox(0,0){$ $}}
\put(44,14){\line(-3,2){15}} \put(19,28){\makebox(0,0){$ $}}
\end{picture}   
\caption{Word $(x_1x_2)(x_3x_4)$.}
\label{F:bracket_3}
\end{figure}
\setlength{\unitlength}{1mm}
\begin{figure}[hbt]
\begin{minipage}[b]{0.4\textwidth}
\begin{picture}(40,34)(-10,0)
\put(2,11){\makebox(0,0){$x_1$}} \put(4,13){\circle*{1}}
\put(6,24){\makebox(0,0){$x_1(x_2x_3)$}} \put(13,22){\circle*{1}}
\put(11,2){\makebox(0,0){$x_2$}} \put(13,4){\circle*{1}}
\put(26,15){\makebox(0,0){$x_2x_3$}} \put(22,13){\circle*{1}}
\put(33,2){\makebox(0,0){$x_3$}} \put(31,4){\circle*{1}}
\put(33,20){\makebox(0,0){$x_4$}} \put(31,22){\circle*{1}}
\put(21,34){\makebox(0,0){$(x_1(x_2x_3))x_4$}} \put(22,31){\circle*{1}}
\put(4,13){\line(1,1){9}} \put(7,10){\makebox(0,0){$ $}}
\put(13,22){\line(1,1){9}} \put(7,28){\makebox(0,0){$ $}}
\put(31,4){\line(-1,1){9}} \put(19,10){\makebox(0,0){$ $}}
\put(22,13){\line(-1,1){9}} \put(28,19){\makebox(0,0){$ $}}
\put(13,4){\line(1,1){9}} \put(16,19){\makebox(0,0){$ $}}
\put(31,22){\line(-1,1){9}} \put(19,28){\makebox(0,0){$ $}}
\end{picture}
\caption{Word $(x_1(x_2x_3))x_4$.}
\label{F:bracket_4}
\end{minipage}
\hfill
\begin{minipage}[b]{0.4\textwidth}
\begin{picture}(50,34)(0,0)
\put(2,2){\makebox(0,0){$x_1$}} \put(4,4){\circle*{1}}
\put(24,2){\makebox(0,0){$x_2$}} \put(22,4){\circle*{1}}
\put(8,15){\makebox(0,0){$x_1x_2 $}} \put(13,13){\circle*{1}}
\put(33,11){\makebox(0,0){$x_3$}} \put(31,13){\circle*{1}}
\put(16,25){\makebox(0,0){$(x_1x_2)x_3 $}} \put(22,22){\circle*{1}}
\put(42,20){\makebox(0,0){$x_4$}} \put(40,22){\circle*{1}}
\put(29,34){\makebox(0,0){$((x_1x_2)x_3)x_4 $}} \put(31,31){\circle*{1}}
\put(4,4){\line(1,1){9}} \put(7,10){\makebox(0,0){$ $}}
\put(22,22){\line(1,1){9}} \put(25,28){\makebox(0,0){$ $}}
\put(31,31){\line(1,-1){9}} \put(37,28){\makebox(0,0){$ $}}
\put(22,4){\line(-1,1){9}} \put(19,10){\makebox(0,0){$ $}}
\put(13,13){\line(1,1){9}} \put(16,19){\makebox(0,0){$ $}}
\put(22,22){\line(1,-1){9}} \put(28,19){\makebox(0,0){$ $}}
\end{picture}
\caption{Word $((x_1x_2)x_3)x_4$.}\label{F:bracket_5}
\end{minipage}

\end{figure}
    
    \subsubsection{Enumeration and classification due to \cite{PhVo1}}\label{Ss:dual}
In \cite{PhVo1} the authors fully classified varieties of quasigroups of Bol-Moufang type. They showed that although there are 60 ``different" identities of Bol-Moufang type, the exact number of such varieties is 26. This means that some of these identities define the same variety - one says then that identities are \emph{equivalent}. Moreover, in \cite{PhVo1} all inclusions between these 26 varieties were determined, and all necessary counterexamples - \emph{distinguishing examples} were provided (we return to these counterexamples in Section \ref{S:examples}). We use here notation from \cite{PhVo1} for naming the identities and varieties.

By $Vi$ with $V\in\{A,\ldots F\}$ and $i\in\{1,\ldots 5\}$ we denote the word whose variables are ordered according to $V$ (see Table \ref{T:id}) and bracketed according to $i$ (see Table \ref{T:br}). For example, $A2$ is the word $x((xy)z)$ which can be presented as the following binary tree (see Figure \ref{F:A2}).
\setlength{\unitlength}{1mm}
\begin{figure}[hbt]
    \centering
\begin{picture}(40,34)(0,0)
\put(2,2){\makebox(0,0){$x$}} \put(4,4){\circle*{1}}
\put(24,2){\makebox(0,0){$y$}} \put(22,4){\circle*{1}}
\put(8,14){\makebox(0,0){$xy$}} \put(13,13){\circle*{1}}
\put(33,11){\makebox(0,0){$z$}} \put(31,13){\circle*{1}}
\put(28,25){\makebox(0,0){$(xy)z$}} \put(22,22){\circle*{1}}
\put(2,20){\makebox(0,0){$x$}} \put(4,22){\circle*{1}}
\put(12,34){\makebox(0,0){$x((xy)z)$}} \put(13,31){\circle*{1}}
\put(4,4){\line(1,1){9}} \put(7,10){\makebox(0,0){$ $}}
\put(4,22){\line(1,1){9}} \put(7,28){\makebox(0,0){$ $}}
\put(22,4){\line(-1,1){9}} \put(19,10){\makebox(0,0){$ $}}
\put(13,31){\line(1,-1){9}} \put(28,19){\makebox(0,0){$ $}}
\put(13,13){\line(1,1){9}} \put(16,19){\makebox(0,0){$ $}}
\put(22,22){\line(1,-1){9}} \put(19,28){\makebox(0,0){$ $}}
\end{picture}
\caption{Word $A2:\;x((xy)z)$.}\label{F:A2}
\end{figure}
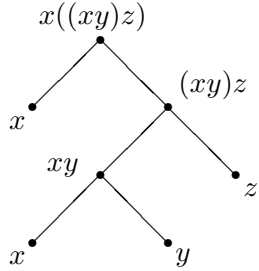

By $Vij$, with $V\in\{A,\ldots ,F\}$ and $1\leq i<j\leq 5$, we denote the identity whose left-hand side is $Vi$ and right hand-side is $Vj$. For example, the identity $A25$ corresponds to $A2=A5$, i.e. $x((xy)z)=((xx)y)z$.

The \emph{dual} to the identity $Vij$ is the identity $V'j'i'$ which can be calculated due to the following rules:
\begin{equation*}
A'=F,\;\;\;B'=E,\;\;\;C'=C,\;\;\;D'=D,\;\;\;1'=5,\;\;\;2'=4,\;\;\;3'=3,
\end{equation*}
with $\gamma''=\gamma \text{ for } \gamma=V \text{ or } \gamma=i$.

Defining identities for varieties of Bol-Moufang type are chosen in such a way that they are either self-dual or coupled into dual pairs (\cite[Table 1]{PhVo1}).

\subsection{(One-sided) Loops} 

In \cite{Kun} it was shown that a Moufang quasigroup $(Q,\cdot,\backslash,/)$, i.e. a quasigroup belonging to the variety defined by the one of equivalent identities: $B15$, $D23$, $D34$ or $E15$, is necesarrily a (Moufang) loop. This means there is a \emph{neutral element} $e\in Q$ such that $a\cdot e=e\cdot a=e\backslash a=a/e=a$ for every $a\in Q$.

Let $\mathcal{A}$ denote the abbreviation of the name of the Bol-Moufang type variety of quasigroups. In \cite{PhVo1} the information whether the variety $\mathcal{A}$ is a variety of (one-sided) loops is provided by the superscript following $\mathcal{A}$:
\begin{enumerate}
\item $\mathcal{A}^2$ means that every quasigroup in $\mathcal{A}$ is a loop;
\item $\mathcal{A}^L$ means that every quasigroup in $\mathcal{A}$ is a left loop, and there is a quasigroup in $\mathcal{A}$ that is not a right loop;
\item $\mathcal{A}^R$ is defined dually to $\mathcal{A}^L$, i.e. every quasigroup in $\mathcal{A}$ is a right loop, and there is a quasigroup in $\mathcal{A}$ that is not a left loop;
\item $\mathcal{A}^0$ means that there is a quasigroup in $\mathcal{A}$ that is not a left loop, and there is a quasigroup in $\mathcal{A}$ that is not a right loop.
\end{enumerate}
A left loop $(Q,\cdot,\backslash,/)$ possesses a left neutral element: $e\backslash a=e\cdot a=a$. Dually, a right loop $(Q,\cdot,\backslash,/)$ possesses a right neutral element: $a/e=a\cdot e=a$.

\subsection{Affine quasigroups}\label{Ss:AMBMQ} 
In the next sections, we will consider extensions of Bol-Moufang quasigroups by affine quasigroups of the same type. For this purpose we have to know the structure of affine BMq's. 
\begin{definition}
Let $(A,+)$ be an abelian group, $t,s\colon A\rightarrow A$ (not necessarily commuting) automorphisms of $(A,+)$ and $c_0\in A$. We define a new operation on $A$ by
\begin{equation}
a\ast b=t(a)+s(b)+c_0.    
\end{equation}
The resulting quasigroup $(A,\ast)$ is said to be \emph{affine} over $(A,+)$.
\end{definition}
Affine quasigroups are also called \emph{central quasigroups} or \emph{$T$-quasigroups} (\cite[Section 1.4.4.3]{Shch}). They are polynomially equivalent to a module over the ring of Laurent polynomials of two non-commuting variables.

If $(A,\ast)$ is an affine quasigroup, then the left division is defined by $a\backslash b=s^{-1}\big(b-ta-c_0\big)$ and the right division by $a/ b= t^{-1}\big(a-sb-c_0\big)$. 

\begin{remark}
A quasigroup $(Q,\cdot)$ is \emph{medial} (or \emph{entropic}) if it satisfies the medial law
\begin{equation}
(x\cdot y)\cdot (z\cdot t)=(x\cdot z)\cdot (y\cdot t).
\end{equation}
The famous Bruck-Murdoch-Toyoda Theorem \cite{Bruc,Mur,Toy} (see also \cite[Theorem 2.65]{Shch}) states that, up to isomorphism, medial quasigroups are precisely affine quasigroups with commuting automorphisms $t,s$.
\end{remark}

 In Section \ref{S:EH} we will find conditions on $t$, $s$ and $c_0$ under which an affine quasigroup satisfies a given BMq identity. We will see (check List \ref{list}), that $t$ and $s$ always commute (with the assumption that there are no zero divisors); thus we can think of $(A,\ast)$ as a module over $\mathbb{Z}[t,s]$. 

\section{Extensions and (co)Homology}\label{S:EH}

As briefly discussed in the introduction, our paper follows Eilenberg's lead on the relation between extensions and (co)homology in the case of Bol-Moufang quasigroups.
Let us cite from \cite{Eil}: {\it The theory of group extensions...has been shown (Eilenberg and Mac Lane \cite{EM12} to be
closely related with the so called cohomology theory of abstract groups. \ldots The purpose of this note is to present a theory
of extensions for some general classes of algebras including associative and Lie algebras as special cases. The factor-set
relations obtained should be useful in designing cohomology theories for other kinds of algebras.}
Our paper concentrates on a potential theory of (co)homology for Bol-Moufang quasigroups. We start by analyzing extensions of Bol-Moufang quasigroups
by affine quasigroups of the same type. From this we gain insight on $\partial_3$ and $\partial_2$ of a related chain complex and so we can define
$H_2$ and $H_1$ of Bol-Moufang quasigroups (concrete examples of calculations are given in Section \ref{S:examples}).

\subsection{Extensions for quasigroups with one relation}\label{S:EQR}
Quasigroups of Bol-Moufang type have one relation in addition to the quasigroup relations, and for quasigroups with one relation we can study extensions of such quasigroups by affine quasigroups with the same relation. The method is general and follows the philosophy of \cite{Eil}. That is, for a quasigroup $(X,*_X)$ and an affine quasigroup $(A,*_A)$, we construct an extension which is a quasigroup on $A\times X$ with an operation of the same BMq type as before. This will be explained in detail in an example of the right Bol quasigroup ($RBQ^L$). However all other cases are very similar so we leave it as an exercise to the reader (we summarize the results in List~\ref{list}).

As previously noted, we study extensions of Bol-Moufang quasigroups not to classify them, but to have a first glimpse at their homology via their second and third boundary operations.
\subsubsection{Extension of a quasigroup by an affine quasigroup}
As a warm up
we show that an extension of a quasigroup by an affine quasigroup is a quasigroup.

As we noted before if $a*b=ta+sb+c_o$ then
$$a/b= t^{-1}(a-sb-c_o)$$ and 
$$a\backslash b =s^{-1}(-ta+b-c_0).$$
Then we consider an extension of a quasigroup $X$ by an affine quasigroup $A$ given by 
$$(a,x)*(b,y)= (a*b+ \phi(x,y), xy).$$ 
Here $\phi$ is an arbitrary function, $\phi: X\times X \to A$, called 2-cochain; see Subsection~\ref{Ex:RBQ}, where we analyze conditions on $\phi$ imposed by a Bol-Moufang equation.
\begin{proposition}
An extension of a quasigroup by an affine quasigroup is a quasigroup.
\end{proposition}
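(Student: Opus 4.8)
The plan is to verify directly that the operation $(a,x)*(b,y)=(a*b+\phi(x,y),\,xy)$ on $A\times X$ makes all translations bijective, which by the translation characterization of quasigroups recalled in Section~\ref{S:QBM} is equivalent to $(A\times X,*)$ being a quasigroup. Concretely, I would fix an element $(b,y)$ and show that the right translation $R(b,y)$ is a bijection, and fix $(a,x)$ and show that the left translation $L(a,x)$ is a bijection; by symmetry the two arguments are essentially identical, so I would write out one carefully and indicate the other.

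First I would solve for the left division: given $(a,x)$ and $(c,z)$, I want the unique $(b,y)$ with $(a,x)*(b,y)=(c,z)$. Comparing second coordinates gives $xy=z$, which has the unique solution $y=x\backslash z$ in $X$ since $X$ is a quasigroup. Comparing first coordinates gives $a*b+\phi(x,y)=c$, i.e. $a*b=c-\phi(x,y)$; since $A$ is a quasigroup (indeed affine, with $a*b=t(a)+s(b)+c_0$), this has the unique solution $b=a\backslash_A(c-\phi(x,y))=s^{-1}\big(-t(a)+c-\phi(x,y)-c_0\big)$, where $\phi(x,y)$ is already determined because $y$ is. Thus $(b,y)$ exists and is unique, so $L(a,x)$ is a bijection. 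Dually, given $(b,y)$ and $(c,z)$, comparing second coordinates gives $x=z/y$ uniquely in $X$, then $\phi(x,y)$ is determined, and the first coordinate equation $a*b=c-\phi(x,y)$ has the unique solution $a=(c-\phi(x,y))/_A b=t^{-1}\big(c-\phi(x,y)-s(b)-c_0\big)$, so $R(b,y)$ is a bijection.

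Having established that every translation is a bijection, I would simply invoke the equivalent definition of a quasigroup from Section~\ref{S:QBM} (a binary algebra in which all translations are bijections) to conclude. Alternatively, one can phrase the whole argument as: the projection $A\times X\to X$ is a surjection onto the quasigroup $X$ whose fibers carry the torsor-like structure of $A$, and the existence/uniqueness statements above are exactly the "two determine the third" property.

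Honestly there is no real obstacle here: the only thing to be careful about is the order of solving — one must solve the $X$-coordinate first so that $\phi(x,y)$ becomes a known constant before solving the $A$-coordinate, and one must use that $A$ being affine (hence a quasigroup) guarantees the $A$-coordinate equation $a*b=c-\phi(x,y)$ has unique solutions in either unknown. The statement is genuinely a "warm up," as the authors say, and the proof is a short direct verification; the substantive content of the section lies later, in identifying the conditions on $\phi$ forced by a Bol-Moufang identity.
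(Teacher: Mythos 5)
Your proof is correct and follows essentially the same route as the paper: both arguments come down to producing the explicit division formulas on $A\times X$ by solving the $X$-coordinate equation first (so that $\phi(x,y)$ becomes a known constant) and then the $A$-coordinate equation using the affine structure of $A$. The paper packages the conclusion by writing down $(a,x)/(b,y)$ and $(a,x)\backslash(b,y)$ explicitly and verifying an identity such as $((a,x)/(b,y))*(b,y)=(a,x)$, whereas you invoke the bijectivity-of-translations characterization; since these definitions of a quasigroup are equivalent (as recalled in Section~\ref{S:QBM}), the difference is purely one of presentation.
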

\begin{proof}
We find that:
$$(a,x)/(b,y)= (a/b -t^{-1}\phi(x/y,y),x/y)$$
and analogously
$$(a,x)\backslash (b,y)=(a\backslash b - s^{-1}\phi(x,x\backslash y),x\backslash y).$$
We will check, as a typical case, that 
$((a,x)/(b,y))(b,y)= (a,x):$
$$((a,x)/(b,y))(b,y)=(a/b-t^{-1}\phi(x/y,y),x/y)*(b,y)= $$
$$((a/b)*b - t(t^{-1}\phi(x/y,y))+ \phi(x/y,y)),(x/y)y)=(a,x)$$
as needed. Other equalities for $\backslash $  and $/ $  are checked in a similar way.
\end{proof}

We used in our calculation the following simple lemma:
\begin{lemma}\label{lem:32}
We have the following identities in the affine quasigroup $(A,\ast)$ over an abelian group $(A,+)$:
\begin{enumerate}
\item[(1)] $(a+b)*c= ta+tb+sc+c_0= ta+sc+c_0 + tb= a*c+tb= b*c+ ta$.
\item[(2)] $ a*(b+c)= ta+sb +sc+ c_0= a*b+sc= a*c+sb$.
\item[(3)] $ (a+b)*(c+d)= ta+tb +sc+ sd+ c_0= a*c+tb+sd=a*c+b*d-c_0$.
\end{enumerate}
\end{lemma}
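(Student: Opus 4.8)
The plan is to expand both sides of each identity using the definition $a\ast b = ta + sb + c_0$ and then rearrange, invoking only two structural facts: that $t$ and $s$ are automorphisms of $(A,+)$, hence in particular additive, so $t(a+b)=ta+tb$ and $s(c+d)=sc+sd$; and that $(A,+)$ is abelian, so finite sums may be reassociated and reordered freely. In particular no use will be made of multiplicativity of $t$ or $s$, nor of any commutation relation between $t$ and $s$, so the identities hold for arbitrary (possibly non-commuting) automorphisms.

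First I would treat item (1). By additivity of $t$, $(a+b)\ast c = t(a+b) + sc + c_0 = ta + tb + sc + c_0$, which is the first claimed equality. The remaining equalities in (1) are pure bookkeeping inside the abelian group: reordering the four summands gives $ta + sc + c_0 + tb$, and since $a\ast c = ta + sc + c_0$ this is $(a\ast c) + tb$; reordering the other way yields $(b\ast c) + ta$. Item (2) is the mirror statement, proved the same way but using additivity of $s$: $a\ast(b+c) = ta + sb + sc + c_0$, from which one peels off either $sc$ to get $a\ast b + sc$, or $sb$ to get $a\ast c + sb$.

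For item (3) I would apply additivity of both $t$ and $s$ to obtain $(a+b)\ast(c+d) = ta + tb + sc + sd + c_0$. Grouping the summands $ta$, $sc$, $c_0$ recognizes $a\ast c$ and leaves $(a\ast c) + tb + sd$. The last equality then follows by writing $tb + sd = (tb + sd + c_0) - c_0 = (b\ast d) - c_0$, so that $(a+b)\ast(c+d) = (a\ast c) + (b\ast d) - c_0$.

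I do not expect any genuine obstacle: the computation is entirely formal. The one point that deserves a moment's attention is the single constant $c_0$ — when combining two $\ast$-products one must avoid double-counting it, which is exactly the origin of the corrective $-c_0$ in item (3). Everything else is routine manipulation in an abelian group together with the additivity of $t$ and $s$.
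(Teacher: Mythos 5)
Your proof is correct and matches the paper's intent exactly: the paper states the lemma without a separate proof, the chains of equalities in the statement being precisely the expansion via additivity of $t$ and $s$ and reordering in $(A,+)$ that you carry out. Your observation that no commutativity or multiplicativity of $t,s$ is needed, and that the $-c_0$ in item (3) compensates for double-counting the constant, is exactly right.
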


\subsubsection{Example: Calculating the cochain condition for right Bol quasigroups}\label{Ex:RBQ} 
The variety of right Bol quasigroups ($RBQ^L$) consists of quasigroups which satisfy the additional identity: \begin{equation}\label{eq:rbq}
x((yz)y) = ((xy)z)y.    
\end{equation}
(This is identity E25.) Let $X\in RBQ^L$. We analyze in this subsection extensions of $X$ by an affine quasigroup $A$ also satisfying $RBQ^L$ condition. We look for the $RBQ^L$ structure on $A\times X$ such that
$$(a,x)*(b,y)= (a*b +\phi(x,y);xy) $$
where $\phi: X\times X \to A$ is called a 2-cochain of the extension. We will find now the condition for which $(A\times X,*)$ belongs to $RBQ^L$. Thus we need: 
$$(a,x)*(((b,y) *(c,z))*(b,y)) = (((a,x)*(b,y))* (c,z))* (b,y) .$$
The calculation is as follows:
$$L=(a,x)*(((b,y)*(c,z))*(b,y)) = (a,x)*((b*c+\phi(y,z), y*z)*(b,y))=$$
$$(a,x)((b*c+\phi(y,z))*b+ \phi(yz,y), (yz)y)= (a,x)*((b*c)*b+t\phi(y,z)+ \phi(yz,y), (yz)y)=$$
$$(a*\big((b*c)*b+t\phi(y,z)+ \phi(yz,y)\big)+ \phi(x,(yz)y), x((yz)y)=$$
$$(a*((b*c)*b)+st\phi(y,z)+ s\phi(yz,y))+ \phi(x,(yz)y), x((yz)y)).$$
Similarly we get:
$$R=(((a,x)*(b,y))* (c,z))* (b,y)=((a*b+\phi(x,y),xy)*\big(c,z)\big)*(b,y)=$$
$$((a*b+\phi(x,y))*c+ \phi(xy,z)),(xy)z))*(b,y)=((a*b)*c+t\phi(x,y)+\phi(xy,z))),(xy)z)*(b,y)=$$
$$(((a*b)*c+t\phi(x,y)+\phi(xy,z))*b+\phi((xy)z,y), ((xy)z)y))=$$
$$(((a*b)*c)*b+ t^2\phi(x,y)+t\phi(xy,z))+\phi((xy)z,y),((xy)z)y))$$
 Thus $L=R$ if 
 $$st\phi(y,z)+ s\phi(yz,y)+ \phi(x,(yz)y)= t^2\phi(x,y) + t\phi(xy,z) +\phi((xy)z,y).$$
 We extend $\phi\colon X\times X\to A$ linearly to $\phi\colon k(X\times X)\to A$, where $k$ is a commutative ring with invertible elements $t,s$ (we can take as $k$ a ring of polynomials of 2 variables $\Z[t,s])$. Hence, 
 $$\phi\big(st(y,z)+s(yz,y)+(x,(yz)y)\big)=\phi\big(t^2(x,y) + t(xy,z)+((xy)z,y)\big).$$
 Following examples of groups (or semigroups) and quandles (or shelves)
 we consider the part of a chain complex $C_n=kX^n$ with
 $$\partial_3(x,y,z)=st(y,z)+s(yz,y)+(x,(yz)y)- t^2(x,y) - t(xy,z)-((xy)z,y),$$
 here $\partial_3(x,y,z)\colon kX^3\to kX^2$ (see Subsections \ref{S:homologyboundary} and \ref{S:boundarymaps}).

We also check for which $t$, $s$ and $c_0$ the affine quasigroup satisfies the given Bol-Moufang condition. For example for $RBQ^L$ we need to have $a*((b*c)*b)=((a*b)*c)*b$.\footnote{The straightforward calculation is as follows:
$$a*((b*c)*b)=a*((tb+sc+c_0)*b)=a*(t^2b+tsc+tc_0++sb+c_0)=$$
$$ta+st^2b+stsc+stc_0+s^2b+sc_0+c_0=ta+(st^2+s^2)b+stsc+(1+s+st)c_0.$$
$$((a*b)*c)*b= ((ta+sb+c_0)*c)*b=(t^2a+tsb+tc_0+ sc+c_0)*b=$$
$$t^3a+t^2sb+t^2c_0+tsc+tc_0+sb+c_0=t^3a+(t^2s+s)b+tsc+(1+t+t^2)c_0.
$$
$$a*((b*c)*b)-((a*b)*c)*b= (t-t^3)a+(st^2+s^2-t^2s-s)b+(sts-ts)c+(s+st-t-t^2)c_0;$$
Compare example (11) of List \ref{list}.
This is done, for every Bol-Moufang quasigroup case in List \ref{list} and for these we use expression $\hat H(T)$ of Definition \ref{def:polynomials} (H). }

\subsubsection{Equivalent extensions and $\partial_2$} 

We consider in this subsection the conditions under  which two extensions by an affine quasigroups are equivalent. This will allow us to recover $\partial_2(x,y): kX^2\to kX$.
\begin{definition} 
We say that two extensions of $X$ by $A$, where $(A\times X,*_1)$ is given by a $2$-cochain $\phi_1$ and $(A\times X,*_2)$ is given by a $2$-cochain  $\phi_2$ are equivalent if there is a $1$-cochain $\alpha: X \to A$ such that the map 
$\hat\alpha: (A\times X,*_1)\to (A\times X,*_2)$ given by 
$$\hat\alpha(a,x)= (a+\alpha(x),x)$$
is a quasigroup homomorphism. That is:
$$\hat\alpha((a_1,x_1)*_1(a_2,x_2))=\hat\alpha(a_1,x_1)*_2\hat\alpha(a_2,x_2)$$
\end{definition}
We will show that the condition for a homomorphism (in fact an isomorphism) $\hat\alpha$ leads to the second boundary map $\partial_2$ given by:
$$\partial_2(x,y)= tx + sy - xy.$$
See Section~\ref{partialsquared} for a proof that $\partial_2 \partial_3 = 0$. 

Recall that in $A$ we have $a*b= ta+sb+c_0.$

\begin{proposition}\label{Second}
Two equivalent extensions $(A\times X,*_1)$ and  $(A\times X,*_2)$ given by $2$-cochains $\phi_1$ and $\phi_2$ and related using isomorphism $\hat\alpha$ lead to the equation:
$$\phi_1(x_1,x_2)- \phi_2(x_1,x_2)=\alpha(tx_1+sx_2-x_1x_2).$$

\end{proposition}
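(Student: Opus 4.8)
The plan is to unwind the homomorphism condition $\hat\alpha((a_1,x_1)*_1(a_2,x_2)) = \hat\alpha(a_1,x_1)*_2\hat\alpha(a_2,x_2)$ directly, comparing the $A$-components on both sides (the $X$-components agree trivially since $\hat\alpha$ is the identity on the second factor and both $*_1,*_2$ use the same operation $xy$ on $X$). First I would compute the left-hand side: $(a_1,x_1)*_1(a_2,x_2) = (a_1*a_2 + \phi_1(x_1,x_2),\, x_1x_2)$, so applying $\hat\alpha$ gives $A$-component $a_1*a_2 + \phi_1(x_1,x_2) + \alpha(x_1x_2)$. Then the right-hand side: $\hat\alpha(a_1,x_1) = (a_1+\alpha(x_1),x_1)$ and $\hat\alpha(a_2,x_2) = (a_2+\alpha(x_2),x_2)$, so their $*_2$-product has $A$-component $(a_1+\alpha(x_1))*(a_2+\alpha(x_2)) + \phi_2(x_1,x_2)$.

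Next I would simplify the expression $(a_1+\alpha(x_1))*(a_2+\alpha(x_2))$ using part (3) of Lemma~\ref{lem:32}, which gives $(a+b)*(c+d) = a*c + tb + sd$; here $a = a_1$, $b = \alpha(x_1)$, $c = a_2$, $d = \alpha(x_2)$, so this equals $a_1*a_2 + t\alpha(x_1) + s\alpha(x_2)$. Equating the two $A$-components and cancelling the common term $a_1*a_2$ (legitimate in the abelian group $(A,+)$) yields
$$\phi_1(x_1,x_2) + \alpha(x_1x_2) = t\alpha(x_1) + s\alpha(x_2) + \phi_2(x_1,x_2),$$
hence
$$\phi_1(x_1,x_2) - \phi_2(x_1,x_2) = t\alpha(x_1) + s\alpha(x_2) - \alpha(x_1x_2).$$
Finally I would invoke linearity of $\alpha$ (extended to $kX \to A$, as is done for $\phi$ in Subsection~\ref{Ex:RBQ}) to rewrite the right-hand side as $\alpha(tx_1 + sx_2 - x_1x_2)$, which is exactly $\alpha(\partial_2(x_1,x_2))$ with $\partial_2(x,y) = tx+sy-xy$, completing the proof.

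This argument is essentially a direct computation, so there is no serious obstacle; the only point requiring a little care is the bookkeeping of which automorphism ($t$ versus $s$) multiplies which $\alpha$-term, and making sure that the application of Lemma~\ref{lem:32}(3) is the relevant one (as opposed to (1) or (2)) — that is, recognizing that both $a_i$ and $\alpha(x_i)$ get perturbed, so one genuinely needs the "bilinear" form of the identity. One should also note explicitly that the $X$-components match automatically, so that the single scalar equation on the $A$-components is the full content of the homomorphism condition.
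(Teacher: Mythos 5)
Your proof is correct and follows essentially the same route as the paper's: both expand the two sides of the homomorphism condition, apply Lemma~\ref{lem:32} to the perturbed product $(a_1+\alpha(x_1))*(a_2+\alpha(x_2))$, and equate the $A$-components to obtain the stated identity. The only cosmetic difference is that you make explicit the linear extension of $\alpha$ used in the final rewriting, which the paper leaves implicit.
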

\begin{proof}
We compute
$$\hat\alpha((a_1,x_1)*_1(a_2,x_2))= \hat\alpha(a_1*a_2+\phi_1(x_1,x_2),x_1x_2)=$$
$$(a_1*a_2+ \alpha(x_1x_2)+ \phi_1(x_1,x_2),x_1x_2)$$

$$\hat\alpha(a_1,x_1)*_2\hat\alpha(a_2,x_2)=(a_1+\alpha(x_1),x_1)*_2 (a_2+\alpha(x_2),x_2)= $$
$$((a_1+\alpha(x_1))*_2(a_2+\alpha(x_2))+ \phi_2(x_1,x_2),x_1x_2)\stackrel{Lemma \ref{lem:32}}{=}$$
$$(a_1*a_2+ t\alpha(x_1)+s\alpha(x_2)+ \phi_2(x_1,x_2),x_1x_2).$$
For these two expressions to be equal we need:
$$\phi_1(x_1,x_2)-\phi_2(x_1,x_2)= t\alpha(x_1) +s\alpha(x_2)- \alpha(x_1x_2).$$
Equivalently we can write it as:
$$\phi_1(x_1,x_2)-\phi_2(x_1,x_2)= \alpha(tx_1+sx_2- x_1x_2)$$ 
and from this we deduce that
$\partial_2(x_1,x_2)= tx_1+sx_2-x_1x_2$.

\end{proof}

\subsection{Substitutions, Rooted Binary Trees} \label{sec:substitutions}
As a preparation for extension properties and related homology we introduce some invariants of rooted binary trees.

\begin{definition}\label{def:polynomials}
Consider a rooted binary tree with $n$ leaves enumerated from left to right, $1,2,...,n$ (compare Example \ref{E:tree}). We associate to such a tree with ordered leaves three functions, $h(T),H(T)$ and $Q(T)$, as follows.
\begin{enumerate}
\item[(h)] $h(T)=\sum_{v\in V(T)-L(T)} h_v(t,s)$,  where $V(T)$ is the set of vertices of $T$ and $L(T)$ is the set of leaves, and $h_v(t,s)$ is the associated word\footnote{
More formally, we define $h_v(t,s)$ inductively by $h_{root}(t,s)=1$ and then recursively, if $v_L$ is the left child of $v$ and $v_R$ the right child of $v$ then we define $h_{v_L}(t,s)= h_v(t,s) t$ and $h_{v_R}(t,s)=sh_v(t,s)$.} in letters $t$ and $s$ obtained by following the unique path from the root to $v$ and creating the word by putting $t$ if the path turns left and $s$ if it turns right. If $T$ is one vertex tree then we put $h(T)=0.$
\item[(H)] $H(T)= \sum_{v_i\in L(T)}h_{v_i}(t,s)a_i$ while $n$ elements $a_1,a_2,...a_n$ are chosen from an affine quasigroup $A$ ($n$ is the number of leaves of $T$)  and 
label leaves of $T$ by these letters from left to the right. We let letters $x_i$ and $a_i$ label the same leaf and there is a bijection between $x_i$ and $a_i$ sequences.
We define $H(T)= \sum_{v_i\in L(T)}h_{v_i}(t,s)a_i$. 
If $T$ is one vertex tree then we put $H(T)=a_1.$

We combine $H(T)$ and $h(T)$  as follows
$$\hat H(T)= H(T)+h(T)c_0 \mbox{ for a fixed $c_0$}.$$ 

This expression is used to determine for which $t$, $s$, and $c_0$ an affine quasigroup is of given Bol-Moufang type; see Lemma \ref{lem:H}.

\item[(Q)] We first associate recursively to each 
vertex of $T$ a weight $\bar g(v)$ being a word in alphabet 
$x_1,x_2,...,x_n$ that is:
\[
\bar g(v)=
\left\{
\begin{array}{ll}
x_i & \mbox{when $v\in L(T)$ is decorated by $x_i$},\\
\bar g(v_L)\bar g(v_R) & \mbox{when $v\in V(T)-L(T)$ and $v_L$, $v_R$ are children of $v$ in $T$}.
\end{array}
\right.
\]
Having $\bar g(v)$ defined\footnote{We associated to 
every vertex $v$ of a tree $T$ a weight $\bar g(v)$, we described it recursively but here we stress that $T$ was created from some word in $X$ so now every vertex has some tree under $v$, say $T_v$, and $\bar g(v)$ is the word of $T_v$.} we define $g(v)$ for $v\in V(T)-L(T)$ as a pair 
$(\bar g(v_L),\bar g(v_R))$.
 We define now $Q(T)$ as the sum over 
all vertices except leaves of $h_v(t,s)g(v)$ that is 
$$Q(T)=\sum_{v\in V(T)-L(T)} h_v(t,s)g(v).$$
Notice that $Q(T)= h(T)$ if we put $g(v)=1$.
\end{enumerate}
\end{definition}
Note that in Section~\ref{S:EQR}, we have defined $\partial_3(x,y,z)$ for right Bol quasigroups (E25) as $Q(E2) - Q(E5)$. In general, we take $\partial_3(x,y,z) = Q(Vi) - Q(Vj)$ for quasigroups satisfying identity $Vij$. 

In the case of Bol-Moufang quasigroups we will have binary rooted trees of 4 leaves and one variable associated to leaves in $H(T)$ and $Q(T)$ will be repeated.

\begin{example}\label{E:tree}
 For the rooted tree $T$ presented in Figure \ref{rootedtree1} (corresponding to the word $((x_1(x_2x_3))(x_4x_5))(x_6x_7)$), one obtains the following polynomials:
\begin{align*}
h(T) =& 1+t+s+t^2+ts+t^2s,\\
H(T)=&t^3a_1+ t^2sta_2+ t^2s^2a_3+ tsta_4+ ts^2a_5+sta_6+s^2a_7,\\ 
Q(T)=&t^2s(x_2,x_3)+ ts(x_4,x_5)+t^2(x_1,x_2x_3)+  s(x_6,x_7)+\\
 &t(x_1(x_2x_3),x_4x_5) + ((x_1(x_2x_3))(x_4x_5),x_6x_7).
\end{align*}
\setlength{\unitlength}{1mm}
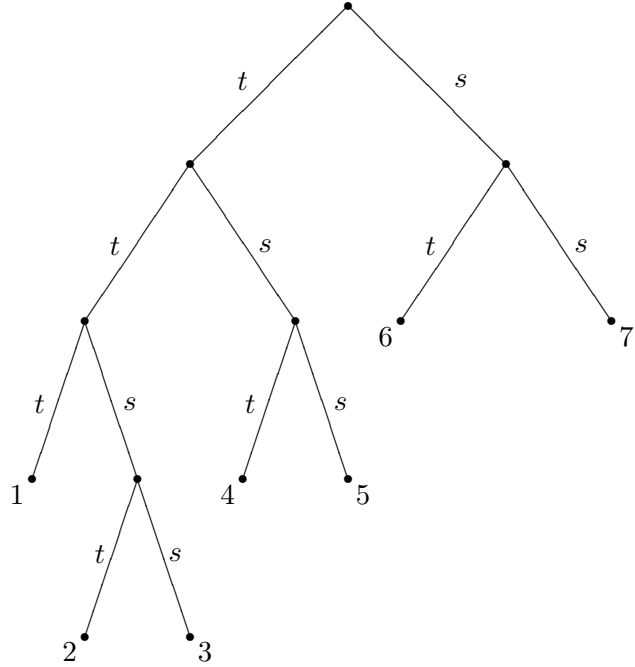
\begin{figure}[hbt]
    \centering
 \begin{picture}(90,90)(0,0)
\put(7,23){\makebox(0,0){$1$}} \put(9,25){\circle*{1}}
\put(14,2){\makebox(0,0){$2$}} \put(16,4){\circle*{1}}
\put(32,2){\makebox(0,0){$3$}} \put(30,4){\circle*{1}}
\put(16,46){\circle*{1}}
\put(23,25){\circle*{1}}
\put(30,67){\circle*{1}}
\put(51,88){\circle*{1}}
\put(9,25){\line(1,3){7}} \put(10,35){\makebox(0,0){$t$}}
\put(16,46){\line(2,3){14}} \put(20,56){\makebox(0,0){$t$}}
\put(16,46){\line(1,-3){7}} \put(22,35){\makebox(0,0){$s$}}
\put(23,25){\line(-1,-3){7}} \put(18,15){\makebox(0,0){$t$}}
\put(23,25){\line(1,-3){7}} \put(28,15){\makebox(0,0){$s$}}
\put(30,67){\line(1,1){21}} \put(37,78){\makebox(0,0){$t$}}
\put(44,46){\circle*{1}}
\put(30,67){\line(2,-3){14}} \put(40,56){\makebox(0,0){$s$}}
\put(35,23){\makebox(0,0){$4$}} \put(37,25){\circle*{1}}
\put(53,23){\makebox(0,0){$5$}} \put(51,25){\circle*{1}}
\put(37,25){\line(1,3){7}} \put(38,35){\makebox(0,0){$t$}}
\put(51,25){\line(-1,3){7}} \put(50,35){\makebox(0,0){$s$}}
\put(51,88){\line(1,-1){21}} \put(66,78){\makebox(0,0){$s$}}
\put(72,67){\circle*{1}}
\put(56,44){\makebox(0,0){$6$}} \put(58,46){\circle*{1}}
\put(88,44){\makebox(0,0){$7$}} \put(86,46){\circle*{1}}
\put(58,46){\line(2,3){14}} \put(62,56){\makebox(0,0){$t$}}
\put(86,46){\line(-2,3){14}} \put(82,56){\makebox(0,0){$s$}}
\end{picture}   
\caption{A rooted tree $T$.}
\label{rootedtree1}
\end{figure}
\end{example}

\begin{example}\label{E:fig1_5}

\noindent For the rooted trees of Figures \ref{F:bracket_1} through \ref{F:bracket_5}, we have the following polynomials: 

\begin{center}
{\small 
\begin{tabular}{c|c|c}
    $h$ & $H$ & $Q$ \\
    \hline
    $1 + s + s^2$ & $t a_1 + st a_2 + s^2t a_3 + s^3 a_4$ & $s^2(x_3, x_4) + s(x_2, x_3x_4) + (x_1,x_2(x_3x_4))$ \\
    $1 + s + st$ & $t a_1 + st^2 a_2 + sts a_3 + s^2 a_4$ & $st(x_2, x_3) + s(x_2x_3, x_4) + (x_1, (x_2x_3)x_4)$ \\
    $1 + t + s$ & $t^2 a_1 + ts a_2 + st a_3 + s^2 a_4$ & $t(x_1,x_2) + s(x_3,x_4) + (x_1x_2, x_3x_4)$ \\
    $1 + t + ts$ & $t^2 a_1 + tst a_2 + ts^2 a_3 + s a_4$ & $ts(x_2, x_3) + t(x_1, x_2x_3) + (x_1(x_2x_3), x_4)$ \\
    $1 + t + t^2$ & $t^3 a_1 + t^2s a_2 + ts a_3 + s a_4$ & $t^2(x_1, x_2) + t(x_1x_2, x_3) + ((x_1x_2)x_3, x_4)$
\end{tabular}}
\end{center}
\end{example}
As mentioned at the end of part (H) of Definition \ref{def:polynomials}, we can use expression $\hat H$ to analyze affine quasigroups with specific relations:
\begin{lemma}\label{lem:H}
Let $w=a_1*a_2*...*a_n$, with chosen bracketing given by a rooted binary tree $T_w$, be a word in the affine quasigroup $(A,*)$ over the abelian group
$(A,+)$. If we write $w$ as a linear combination of $a_1,...,a_n,c_0$ and use the definition of $*$ in the affine case, then
$$w= H(T_w)+h(T_w)c_0 \mbox{ or shortly } w=\hat H(T_w).$$
We use this lemma to find conditions for $t$, $s$ and $c_0$ for which the affine quasigroup $(A,*)$ satisfies the equation $w_1=w_2$ for chosen $w_1$ and $w_2$. In List \ref{list} we use this lemma for
Bol-Moufang affine quasigroup equations defined in \cite{PhVo1}.
\end{lemma}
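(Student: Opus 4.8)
The plan is to prove Lemma~\ref{lem:H} by structural induction on the rooted binary tree $T_w$, using at each step only the defining formula $a*b = t(a)+s(b)+c_0$ of an affine quasigroup together with the additivity of the automorphisms $t$ and $s$ (that is, Lemma~\ref{lem:32}). For the base case $T_w$ is a single vertex: then $w=a_1$, while $H(T_w)=a_1$ and $h(T_w)=0$ by Definition~\ref{def:polynomials}, so indeed $w=\hat H(T_w)$.

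For the inductive step, write $T_w$ as a root $r$ with left subtree $T_L$ carrying the leaves $a_1,\dots,a_k$ and right subtree $T_R$ carrying $a_{k+1},\dots,a_n$, so that $w=w_L*w_R$ where $w_L,w_R$ are the subwords read off $T_L,T_R$; by the affine formula $w=t(w_L)+s(w_R)+c_0$, and the inductive hypothesis gives $w_L=\hat H(T_L)$, $w_R=\hat H(T_R)$. The remaining task is to relate $H$ and $h$ of $T_w$ to those of its subtrees, and the key point is the ``prefix'' compatibility of the path weights: for a vertex $v$ of $T_L$ the root-to-$v$ path in $T_w$ is the root-to-$v$ path inside $T_L$ with one left step prepended, so $h^{T_w}_v$ is $h^{T_L}_v$ with the automorphism $t$ applied on the outside; dually $s$ for vertices of $T_R$; and $h^{T_w}_r=1$ for the root. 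Summing over leaves yields $H(T_w)=t\big(H(T_L)\big)+s\big(H(T_R)\big)$, and summing over the other internal vertices, with the root contributing a single $c_0$, yields $h(T_w)c_0=c_0+t\big(h(T_L)c_0\big)+s\big(h(T_R)c_0\big)$. Adding these and regrouping by additivity gives
$$\hat H(T_w)=t\big(\hat H(T_L)\big)+s\big(\hat H(T_R)\big)+c_0=t(w_L)+s(w_R)+c_0=w_L*w_R=w,$$
completing the induction.

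The only genuinely delicate point is this prefix compatibility, where one must be careful that $t$ and $s$ need not commute: one should fix once and for all the convention of Definition~\ref{def:polynomials} (footnote) under which a word in $t,s$ is interpreted as a composition of automorphisms, and verify that appending a step at the bottom of a path corresponds to composing on the correct side, matching the way the nested $*$'s in $w=t(w_L)+s(w_R)+c_0$ distribute over sums. Once this normalization is pinned down, everything else is the routine bookkeeping indicated above --- no commutativity of $t$ and $s$ and no Bol--Moufang identity is used --- and one can sanity-check the resulting formula directly against Examples~\ref{E:tree} and~\ref{E:fig1_5}.
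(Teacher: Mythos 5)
Your proof is correct and follows essentially the same route as the paper's: induction on the length of the word, with the single-vertex base case and the inductive step splitting $w=w_L*w_R$ at the root so that $w=t\hat H(T_{w_L})+s\hat H(T_{w_R})+c_0$. The only difference is that you spell out the prefix-compatibility identities $H(T_w)=tH(T_L)+sH(T_R)$ and $h(T_w)=1+th(T_L)+sh(T_R)$, which the paper's proof uses silently in its final regrouping; making that step explicit (and flagging the left/right convention for the words in $t,s$) is a reasonable refinement, not a different argument.
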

\begin{proof}
We proceed by induction on $n$. The initial condition, for $n=1$, holds immediately as in the case $T$ has one vertex, labelled $a_1$, so $w=a_1$ and $\hat H(T)=a_1$ by definition.
For an inductive step, we assume that Lemma \ref{lem:H} holds for word shorter than $n$ ($n>1$), and we consider $w=w_L*w_R$ where $w$ is of length $n$ and $w_L$ and $w_R$ are shorter. 
Thus $w=tw_L+sw_R +c_0$ and by the inductive assumption $w_L=\hat H(T_{w_L})$ and $w_R= \hat H(T_{w_R})$. Therefore
\begin{align*}
    w & = t\hat H(T_{w_L})+s\hat H(T_{w_R})+ c_0 \\
    & = tH(T_{w_L})+sH(T_{w_R}) + th(T_{w_L})c_0+sh(T_{w_R})c_0 +c_0 \\
    & = H(T_w) +h(T_v)c_0= \hat H(T_w) \mbox{ as needed.}
\end{align*}
\end{proof}

\begin{List}\label{list}
    The following list gives $H$ and $h$ for each variety of quasigroups of Bol-Moufang type. If $Vij$ is the additional identity defining the variety, we calculate $H(Vij)$ as $H(Vi)-H(Vj)$, where $Vi$ and $Vj$ are the rooted binary trees corresponding to the words $Vi$ and $Vj$, respectively. In a similar way, we obtain $h(Vij)$. It allows us to know the structure (values of $t$, $s$ and $c_0$) for affine Bol-Moufang quasigroups in each of 26 cases described in \cite{PhVo1}. In this list we use $h$ and $H$ to find the values of $t$ and $s$ for which our definition of $a * b$ on $A \times X$ gives an affine quasigroup satisfying $Vij$. Because we work with affine quasigroups, $t$ and $s$ are invertible, however initially we do not assume that $t$ and $s$ commute, but it follows in each case with the assumption that the ring we work in has no zero divisors. Thus $H$ and $h$ are  computed without assuming commutativity which follows in every case from the fact that each coefficient of $H$ should be equal to zero.
    Furthermore notice that for a right loop we have $t=1$ and for a left loop $s=1$.
    Moreover, we check that the conditions on $t$ and $s$ do not depend on the identity used to define the corresponding variety of quasigroups.
     After \cite{PhVo1} we have a concept of duality to BMq with concrete identity. We can now say that given $Vij$ Bol-Moufang quasigroup is dual to $V'j'i'$ (see Subsection \ref{Ss:dual}) if they are defined using some (so all) dual identities. In this language we can also say when BMq is selfdual. Note that we can prove the commutativity of $t,s$ without assuming no zero divisors in all but 4 cases: $A14,$ its dual $ F25$, $B23$, and $C15$. 

\bigskip

An example entry looks like the following:
     \begin{enumerate}
        \item[(1)] Name of the Quasigroup Variety
        \begin{enumerate}
            \item \textbf{Defining Relation $Vij$}
            \begin{enumerate}
                \item $H(Vij)$
                \item $h(Vij)$
                \item Commutativity of $t$ and $s$. (Note that $t$ and $s$ commute in every case under the assumption that there are no zero divisors, but this assumption is only \textit{required} in a few cases: A14, B23, F25, and C15.
                \item Solutions for $s,t$.
                \item Restrictions on $c_0$
            \end{enumerate}
        \end{enumerate}
    \end{enumerate}
\end{List}

\bigskip

\bigskip

    \begin{enumerate}

        \item[(1)] Groups (so also loops): $t=s=1$ in these cases and $c_0$ is arbitrary.
        \item[(2)] $RG1^L$
        \begin{enumerate}
            \item \textbf{$A25$} (dual to $F14$)
            \begin{enumerate}
                \item $H(A25) = (t(1-t^2-ts)+st^2)a+(s-1)tsb+s(s-1)c$
                \item  $h(A25)=(s-t)(1+t)$
                \item Commuting $t$ and $s$ without additional assumptions since $s=1$.
                \item Solutions: $s=1$, $t^2 = 1$.
                Assuming no zero divisors, $(t,s) = (\pm 1, 1)$.
                \item No restrictions on $c_0$
            \end{enumerate}
           
            \item \textbf{$D25$} (dual to $D14$)
            \begin{enumerate}
                \item  $H(D25) = (t+s^2-t^3-s)a + (st^2 -t^2s)b +(s-1)tsc$
                \item  $h(D25) = s+st-t-t^2=(s-t)(1+t)$.
                \item Commuting $t$ and $s$ without additional assumptions.
                \item Solutions are the same as $A25$.
                \item No restrictions on $c_0$.
            \end{enumerate}
        \end{enumerate}

        \item[(3)] $LG1^R$
        \begin{enumerate}
            \item \textbf{$F14$} (dual to $A25$)
            \begin{enumerate}
                \item $H(F14) = t(1-t)a+(1-t)stb+(s^2t+s^3-ts^2-s)c$
                \item  $h(F14) = s+s^2 -t -ts = (s - t)(1 + s)$
                \item Commuting $t$ and $s$ without additional assumptions. By condition $(a)$, $t^2=1$. If $t=1,$ $(c)$ yields $s^2=1$ and $(b)$ is satisfied. If $t=-1$, then $(c)$ is $-s^2 + s^3 + s^2 -s$, so the same solution set.
                \item Solutions: $t=1$, $s^2=1$. Assuming no zero divisors, $(t,s) = (1, \pm 1)$.
                \item No restrictions on $c_0$
            \end{enumerate}
           
            \item \textbf{$D14$} (dual to $D25$)
            \begin{enumerate}
                \item  $H(D14) = (t+s^3-t^2-s)a +(1-t)stb +(s^2t-ts^2)c$
                \item   $h(D14) = s+s^2 -t -ts = (s - t)(1 + s)$
                \item Commuting $t$ and $s$ without additional assumptions since $t=1$.
                \item Solution: $t=1$, $s^2 = 1$. Assuming no zero divisors, $(t,s) = ( 1, \pm 1)$ 
                \item No restrictions on $c_0$
            \end{enumerate}
        \end{enumerate}

        \item[(4)] $RG2^L$
        \begin{enumerate}
            \item \textbf{$A23$} (dual to $F34$)
            \begin{enumerate}
                \item $H(A23)= (t+st^2-t^2-ts)a +st(s-1)b $
                \item  $h(A23) = (s-1)t$
                \item Commuting $t$ and $s$ without additional assumptions since $s=1$.
                \item Solution: $s=1$ and $t$ is arbitrary.
                \item No restrictions on $c_0$
            \end{enumerate}
        \end{enumerate}

 \item[(5)] $LG2^R$
        \begin{enumerate}
            \item \textbf{$F34$} (dual to $A23$)
            \begin{enumerate}
                \item  $H(F34) = ts(1-t)b +(st+s^2-ts^2-s)c $
                \item  $h(F34) = (1-t)s$
                \item Commuting $t$ and $s$ without additional assumptions since $t=1$.
                \item Solutions: $t=1$ and $s$ is arbitrary.               \item No restrictions on $c_0$.
            \end{enumerate}
        \end{enumerate}

 \item[(6)] $RG3^L$
        \begin{enumerate}
            \item \textbf{$B25$} (dual to $E14$)
            \begin{enumerate}
                \item  $H(B25) =(t+sts-t^3-ts)a+(st^2-t^2s)b+s(s-1)c $
                \item  $h(B25) = s+st-t-t^2 = (s-t)(1+t)$
                \item Commuting $t$ and $s$ without additional assumptions since $s=1$.
                \item Solution: $t^2=1$, $s=1$. Assuming no zero divisors, $(t,s) = (\pm 1, 1)$.
                \item No restrictions on $c_0$.
            \end{enumerate}
        \end{enumerate}

 \item[(7)] $LG3^R$
        \begin{enumerate}
            \item $E14$ (dual to $B25$)
            \begin{enumerate}
                \item   $H(E14) = t(1-t)a+(st+s^3-tst-s)b+(s^2t-ts^2)c$
                \item  $h(E14) = s+s^2-t-st= (s-t)(1+s)$ 
                \item Commuting $t$ and $s$ without additional assumptions since $t=1$.
                \item Solution: $t=1$, $s^2 = 1$. Assuming no zero divisors, $(t,s) = ( 1, \pm 1)$
                \item No restrictions on $c_0$.
                \item Note:  if $(G,\cdot)$ is a group then $(G,/ )$ where $a/b=ab^{-1}$ is an $LG3^R$ quasigroup.
            \end{enumerate}
        \end{enumerate}

\item[(8)] $EQ^2$
        \begin{enumerate}
            \item \textbf{$B23$} (dual to $E34$)
            \begin{enumerate}
                \item $H(B23) = (t+sts-t^2-st)a+(st^2-ts)b$
                \item $h(B23) = (s-1)t$
                \item Commuting $t$ and $s$ only by assuming no zero divisors. To see this, note that the coefficient of $b$ should be equal to $0$---we call this ``condition $(b)$" for short---so  $st^2 =ts$. Now, we rewrite condition $(a)$ as $0=t+s^2t^2-t^2-st = (1-s)t+(s^2-1)t^2= (1-s)(1-(1+s)t)t.$ Here, we assume no zero divisors and conclude either $(1-s)=0$ or $(1-(1+s)t)=0$. If $1-s=0$, then $t=s=1$. If $(1-(1+s)t)=0$, then we see that 
                \begin{align*}
                  \hspace{3 cm}  &  1 = (1+s)t \\
                    & \iff t = t^2+st^2  & \text{multiply by $t$}\\
                    & \iff t = t^2 + ts & \text{$st^2=ts$ by condition $(b)$} \\
                    & \iff t+s=1 \\
                    & \iff s=(1-t). 
                \end{align*}
                Thus, $s$ and $t$ commute. In condition $(b)$ this gives $st^2 = st$, implying $t = 1$. However, $s = (1 - t) = 0$, contradicting the assumption that $s$ is invertible. 
                \item Solution: since $s = 1 - t$ does not lead to a valid solution, the only possibility is $t=s=1$.
                \item No restrictions on $c_0$.
            \end{enumerate}
                
        \item $D15$ (self-dual)
            \begin{enumerate}
                \item  $H(D15) = (t+s^3-t^3-s)a + (st-t^2s)b +(s^2t-ts)c $
                \item $h(D15)=(s-t)(1-s-t)$ 
                \item Commuting $t$ and $s$ without additional assumptions. To see this, note that by substituting condition $(b)$ into $(c)$, $st^2s=ts$ so $st=1$. By substituting $(c)$ into $(b)$, we get $ts=1$.
                \item Solution: $t=s=1$. 
                \item No restrictions on $c_0$.
            \end{enumerate}
            
\item $E34$ (dual to $B23$)
            \begin{enumerate}
                \item $H(E34)+(ts+s^2-tst-s)b + (st - ts^2)c$
                \item  $h(E34) = (1-t)s$
                \item Commuting $t$ and $s$ without additional assumptions.  
                \item Solutions: $t=s=1$.
                \item No restrictions on $c_0$.
            \end{enumerate}
        \end{enumerate}

 \item[(9)] $MQ^2$
        \begin{enumerate}
            \item $B15$ (dual to $E15$)
            \begin{enumerate}
                \item  $H(B15) = (t+s^2t-t^3-ts)a+(st-t^2s)b +s(s^2-1)c$
                \item  $h(B15) = s+s^2 -t-t^2 = (s-t)(1+s+t)$ 
                \item Commuting $t$ and $s$ without additional assumptions. To see this, note that by condition $(c)$, $s^2 = 1$. Substituting this expression into $(a)$, we get 
                \begin{align*}
                    0 & = t+(s^2)t-t^3-ts \\
                    & = t+t-t^3-ts \\
                    & =t(2-t^2-s) \\
                \end{align*}
                Therefore, $s = 2-t^2$  so $t$ and $s$ commute. 
                \item Solution: $t=s=1$
                \item No restrictions on $c_0$.
            \end{enumerate}

             \item $D23$ (dual to $D34$)
            \begin{enumerate}
                \item  $H(D23) = (t-t^2)a + (st^2 -ts)b +st(s-1)c $
                \item   $h(D23) = s+st -t -s = (s-1)t$
                \item Commuting $t$ and $s$ without additional assumptions since $s=t=1$.
                \item Solution: $t=s=1$.
                \item No restrictions on $c_0$.
            \end{enumerate}

             \item $E15$ (dual to $B15$)
            \begin{enumerate}
                \item  $H(E15) = t(1-t^2)a + (st+s^3 -t^2s-s)b +(s^2t-ts)c$
                \item   $h(E15) = s+s^2 -t -t^2 $
                \item Commuting $t$ and $s$ without additional assumptions since $t=s=1$.
                \item Solution: $t=s=1$
                \item No restrictions on $c_0$.
            \end{enumerate}
        \end{enumerate}

 \item[(10)] $LBQ^R$
        \begin{enumerate}
            \item $B14$ (dual to $E25$)
            \begin{enumerate}
                \item  $H(B14) = (t+s^2t-t^2-ts^2)a+(1-t)stb+s(s^2-1)c$
                \item  $h(B14) = s+s^2-t-ts$ 
                \item Commuting $t$ and $s$ without additional assumptions since $t=1$.
                \item Solution: $t=1$, $s^2 = 1$. Assuming no zero divisors, $(t,s) = ( 1, \pm 1)$.
                \item No restrictions on $c_0$.
            \end{enumerate}
        \end{enumerate}

 \item[(11)] $RBQ^L$
        \begin{enumerate}
            \item $E25$ (dual to $B14$)
            \begin{enumerate}
                \item $H(E25)= t(1-t^2)a+(st^2+s^2-t^2s-s)b+(s-1)tsc $
                \item $h(E25) = s+st -t -t^2=(s-t)(1+t)$
                \item Commuting $t$ and $s$ without additional assumptions since $s=1$.
                \item Solutions: $t^2 = 1$, $s=1$. Assuming no zero divisors, $(t,s) = ( \pm 1, 1)$
                \item No restrictions on $c_0$.
            \end{enumerate}
        \end{enumerate}

 \item[(12)] $CQ^0$
        \begin{enumerate}
            \item $C15$ (self-dual)
            \begin{enumerate}
                \item $H(C15) = t(1-t^2)a+(st^2 + s^2 - t^2 s - s)b + (s-1)ts c$
                \item $h(E25) = s+st-t-t^2 = (s-t)(1+t)$ 
                \item Commuting $t$ and $s$ assuming no zero divisors. 
                \item Solutions: $t=s=1$ and $t=s=-1$. Notice that there is a solution with neither $t$ nor $s$ equal to 1, which allows us to easily conclude that $C15$ is not a right/left loop.
                \item No restrictions on $c_0$.
            \end{enumerate}
        \end{enumerate}

\item[(13)] $LC1^2$
        \begin{enumerate}
            \item $A34$ (dual to $F23$)
            \begin{enumerate}
                \item     $H(A34) = ts(1-t)a +(st-ts^2)b+s(s-1)c$
                \item $h(A34) = s(1-t)$
                \item Commuting $t$ and $s$ without additional assumptions since $t=s=1$.
                \item Solution: $t=s=1$.
                \item No restrictions on $c_0$.
            \end{enumerate}
        \end{enumerate}

\item[(14)] $LC2^0$
        \begin{enumerate}
            \item $A14$ (dual to $F25$)
            \begin{enumerate}
                \item  $H(A14)= (t+st-t^2-tst)a+ (s^2t-ts^2)b+ (s^3-s)c$
                \item $h(A14)=(1+s+s^2) - (1+t+ts)=(s-t)(s+1)$
                \item Commuting $t$ and $s$ assuming no zero divisors. By condition $(c)$, $s^2 = 1$, thus condition $(b)$ is satisfied automatically. We rewrite condition $(a)$ as $(1-t)(1+s)t = 0$ and by assuming no zero divisors, conclude that $t = 1$ or $s=-1$. 
                \item Solutions: $t=s=1$ is a solution as well as $s=-1$ with $t$ arbitrary. 
                \item No restrictions on $c_0$.
            \end{enumerate}
        \end{enumerate}

        \item[(15)] $LC3^L$
        \begin{enumerate}
            \item $A15$ (dual to $F15$)
            \begin{enumerate}
                \item $H(A15) = t(1+s-t^2-ts)a+(s^2t-ts)b+s(s^2-1)c$
                \item $h(A15) = s+s^2-t-t^2=(s-t)(1+s+t) $
                \item Commuting $t$ and $s$ without additional assumptions.
                \item Solutions: $s=1$ and $t^2+t = 2$. Assuming no zero divisors, $(t,s) = (-2,1)$
                \item No restrictions on $c_0$.
            \end{enumerate}
        \end{enumerate}

        \item[(16)] $LC4^R$
        \begin{enumerate}
            \item $C14$ (dual to $C25$)
            \begin{enumerate}
                \item $H(C14) = t(1-t)a+(st+s^2t-tst-ts^2)b+s(s^2-1)c$
                \item $h(C14) = s+s^2-t-ts = (s-t)(1+s)0$
                \item Commuting $t$ and $s$ without additional assumptions.
                \item Solutions: $t=1$, $s^2 = 1$. Assuming no zero divisors, $(t,s) = ( 1, \pm 1)$.
                \item No restrictions on $c_0$.
            \end{enumerate}
        \end{enumerate}

        \item[(17)] $RC1^2$
        \begin{enumerate}
            \item $F23$ (dual to $A34$)
            \begin{enumerate}
                \item $H(F23) = t(1-t)a+(st^2-ts)b+st(s-1)c$
                \item $h(F23) = (s-1)t$
                \item Commuting $t$ and $s$ without additional assumptions.
                \item Solution: $t=s=1$ 
                \item No restrictions on $c_0$.
            \end{enumerate}
        \end{enumerate}

        \item[(18)] $RC2^0$
        \begin{enumerate}
            \item $F25$ (dual to $A14$)
            \begin{enumerate}
                \item $H(F25)=t(1-t^2)a + (st^2-t^2s)b+ (s-1)(t+1)sc$
                \item $h(F25)=(1+s+st)-(1+t+t^2)=(s-1)(1+t)$
                \item Commuting $t$ and $s$ assuming no zero divisors. The necessity of this assumption is clear from the duality of $F25$ to $A14$.
                \item Solutions: $t=s=1$ is one solution and the other is $t=-1$ with $s$ arbitrary.
                \item No restrictions on $c_0$.
            \end{enumerate}
        \end{enumerate}

        \item[(19)] $RC3^R$
        \begin{enumerate}
            \item $F15$ (dual to $A15$)
            \begin{enumerate}
                \item $H(F15) = t(1-t^2)a+(st-t^2s)b+(s^2t+s^3-ts-s)c$
                \item  $h(F15) = s+s^2-t-t^2=(s-t)(1+s+t)$
                \item Commuting $t$ and $s$ assuming no zero divisors. 
                \item Solutions: Assuming no zero divisors, $t=1$ and $s$ is either $1$ or $-2$. 
                \item No restrictions on $c_0$.
            \end{enumerate}
        \end{enumerate}

\item[(20)] $RC4^L$
        \begin{enumerate}
            \item $C25$ (dual to $C14$)
            \begin{enumerate}
                \item $H(C25) = t(1-t^2)a +(st^2+sts-t^2s-ts)b +s(s-1)c$
                \item  $h(C25) = s+st -t-t^2= (s-t)(1+t)$
                \item Commuting $t$ and $s$ without additional assumptions since $s=1$.
                \item Solutions: $t^2 =1$ and $s=1$. Assuming no zero divisors, $(t,s) = (\pm 1, 1)$.
                \item No restrictions on $c_0$.
            \end{enumerate}
        \end{enumerate}

\item[(21)] $LAQ^L$
        \begin{enumerate}
            \item $A13$ (dual to $F35$)
            \begin{enumerate}
                \item  $H(A13) = (t+st-t^2-ts)a+(s-1)stb+s^2(s-1)c$
                \item  $h(A13) = s^2-t$
                \item Here, $s=1$ so $t$ and $s$ commute without additional assumptions.
                \item Solution: $t=s=1$ 
                \item No restrictions on $c_0$.
            \end{enumerate}

             \item $A45$ (dual to $F12$)
            \begin{enumerate}
                \item  $H(A45) = t(st + t - t^2 -ts)a + ts(s - 1) b$
                \item  $h(A45) = t+ts-t-t^2$
                \item Here, $s=t=1$ so $t$ and $s$ commute without additional assumptions.
                \item Solutions: $(t,s) = ( 1, 1)$ 
                \item No restrictions on $c_0$.
            \end{enumerate}

             \item $C12$ (dual to $C45$)
            \begin{enumerate}
                \item  $H(C12) = s(t+st-t^2 -ts))b +s^2(s-1)c$
                \item     $h(C12) = s(s-t)$
                \item Commuting $t$ and $s$ without additional assumptions since $t=s=1$.
                \item Solutions: $(t,s) = ( 1, 1)$ 
                \item No restrictions on $c_0$.
            \end{enumerate}
        \end{enumerate}

        \item[(22)] $RAQ^R$
        \begin{enumerate}
            \item $C45$ (dual to $C12$)
            \begin{enumerate}
                \item $H(C45) = t^2(1-t)a+t(st+s^2-ts-s)b$
                \item $h(C45) = t(s-1)$
                \item Commuting $t$ and $s$ without additional assumptions since $t=s=1$.
                \item Solution: $t=s=1$ 
                \item No restrictions on $c_0$.
            \end{enumerate}

            \item $F12$ (dual to $A45$)
            \begin{enumerate}
                \item $H(F12) = (t-t^2)a +st(1-t)b + (s^2t+s^3-ts^2-s^2)c$
                \item  $h(F12) = s +s^2 -t -ts =(s-t)(1+s)$
                \item Commuting $t$ and $s$ without additional assumptions since $t=s=1$.
                \item Solution: $t=s=1$  
                \item No restrictions on $c_0$.
            \end{enumerate}

        \item $F35$ (dual to $A13$)
            \begin{enumerate}
                \item $H(F35) = t^2(1-t)a +(1-t)tsb + (st+s^2 -ts -s) c $
                \item  $h(F35) = s -t^2$
                \item Commuting $t$ and $s$ without additional assumptions since $t=s=1$.
                \item Solution: $t=s=1$  
                \item No restrictions on $c_0$.
            \end{enumerate}
        \end{enumerate}

\item[(23)] $FQ^0$
        \begin{enumerate}
            \item $B45$ (dual to $E12$)
            \begin{enumerate}
                \item $H(B45)= t(t-s)(1-t-s)a + t(st-ts)b$ 
                \item  $h(B45)= 1+t+ts -(1+t+t^2)= t(t-s)$
                \item Commuting $t$ and $s$ without additional assumptions; this is immediate from condition $(b)$.
                \item Solutions: Either $t=s$ or $t+s=1$ assuming no zero divisors.
                \item In the first when $t = s$, $c_0$ is arbitrary. In the second case when $t + s = 1$ (which we call the {\it Alexander solution}), $c_0$ must be $0$.
            \end{enumerate}

             \item $D24$ (self-dual)
            \begin{enumerate}
                \item  $H(D24)= (t-s)(1-t-s)a + (st-ts)tb + (st-ts)sc$
                \item   $h(D24)=(1+s+st)-(1+t+ts)=(s-t)+ (st-ts)$
                \item Commuting $t$ and $s$ without additional assumptions by condition $(b)$.
                \item Solutions: Either $t=s$ or $t+s=1$. 
                \item In the first when $t = s$, $c_0$ is arbitrary. In the second case when $t + s = 1$ (which we call the {\it Alexander solution}), $c_0$ must be $0$.
            \end{enumerate}

        \item $E12$ (dual to $B45$)
            \begin{enumerate}
                \item $H(E12) = s(s - t)(1 - s - t)b + s(st - ts)c$
                \item $h(E12) = (1 + s + s^2) - (1 + s + st) = s(s - t)$ 
                \item Commuting $t$ and $s$ without additional assumptions.
                \item Solutions: Either $t=s$ or $t+s=1$. 
                \item In the first when $t = s$, $c_0$ is arbitrary. In the second case when $t + s = 1$ (which we call the {\it Alexander solution}), $c_0$ must be $0$. 
            \end{enumerate}
        \end{enumerate}

    \item[(24)] $LNQ^L$
        \begin{enumerate}
            \item $A35$ (dual to $F13$)
            \begin{enumerate}
                \item    $H(A35) = t(t+s-t^2-ts)a+(st-ts)b+s(s-1)c$
                \item   $h(A35) = s-t^2$
                \item Commuting $t$ and $s$ without additional assumptions; see condition $(b)$.
                \item Solutions: $s^2=1$ and $t=1$. Assuming no zero divisors.  $(t,s) = (\pm 1, 1)$.
                \item No restrictions on $c_0$.
                \item Notice that if $(G, \cdot)$ is a loop, then $(G,\backslash)$ is an $LNQ^L$ quasigroup. Here, $a \backslash b = c$ means $a \cdot c = b$.
            \end{enumerate}
        \end{enumerate}    

    \item[(25)] $MNQ^2$
        \begin{enumerate}
            \item $C24$ (self-dual)
            \begin{enumerate}
                \item  $H(C24) = t(1-t)a+(st^2+sts -t^2s-ts^2)b+s(s-1)c$
                \item $h(C24) = s+st-t-ts$
                \item Commuting $t$ and $s$ without additional assumptions since $t=s=1$.
                \item Solutions: $t=s=1$ 
                \item No restrictions on $c_0$.
            \end{enumerate}
        \end{enumerate}

    \item[(26)] $RNQ^R$
        \begin{enumerate}
            \item $F13$ (dual to $A35$)
            \begin{enumerate}
                \item      $H(F13) = t(1-t)a +(st-ts)b+s((s-1)(t+s))c$
                \item  $h(F13)=s^2-t$
                \item Commuting $t$ and $s$ without additional assumptions; see condition $(b)$.
                \item Solutions: $t=1$ and $s^2=1$ Assuming no zero divisors, $(t,s) = (1, \pm 1)$ 
                \item No restrictions on $c_0$.
                \item Notice that if $(G, \cdot)$ is a loop, then $(G,/)$ is an $RNQ^R$ quasigroup. Here, $a/b =c$ means $c\cdot b = a$.
            \end{enumerate}
        \end{enumerate}  
        
    \end{enumerate}

\bigskip
\subsection{Homology, Boundary Maps}\label{S:homologyboundary}
Recall that a chain complex, $\mathcal C$, is a sequence of modules $C_n$ and homomorphisms $\partial_n: C_n \to C_{n-1}$ such that $\partial_{n-1}\partial_n=0$. The homology $H_n(\mathcal{C})$ is defined by 
$\frac{ker(\partial_n)}{im(\partial_{n+1})}$.

Any chain complex leads to a cochain complex by taking $C^n= \Hom(C_n\to k)$ and coboundary map 
$\partial^{n}: C^{n}\to C^{n+1} $ defined by $$\partial^{n}(f)(w)= f(\partial_n(w)),$$
where $f\in C^n$ and $w\in C_{n+1}$. Note that if $C_n$ are finitely generated free $k$-modules than the matrix of $\partial_n$ and $\partial^{n-1}$ are transpose one of the other and for $k$ PID the torsion parts are isomorphic ($tor(H_{n-1}(\mathcal C_\bullet))= tor(H^n(\mathcal C^{\bullet} ))$, in particular, $tor(H_{2})= tor(H^3)$).


\subsection{Boundary Map $\partial_3$}\label{S:boundarymaps}

To each Bol-Moufang relation $Vij$, we construct a boundary map $\partial^{Vij}_3:C_3 \to C_2$. As evidenced by the experimental data, $H_2(X;Vij) = \operatorname{Im}\partial_3^{Vij}/\ker \partial_2 \cong \operatorname{Im}\partial_3^{Wk\ell}/\ker \partial_2 = H_2(X;Wk\ell)$ if $Vij$ and $Wk\ell$ define the same variety of quasigroups; see Conjecture \ref{conj:identity-independence}. Consequently the list of boundary maps that follows is organized according to the 26 equational classes (varieties) of BMq's. Recall that identity $Vij$ yields the definition $\partial_3(x,y,z) = Q(Vi) - Q(Vj)$ where $Q$ is as given in Definition~\ref{def:polynomials}.

\begin{enumerate}
    \item[(2)] $RG1^L$ Quasigroups

    \begin{enumerate}
        \item $A25$
        \[ \partial_3(x,y,z)=st(x,y) + s(xy,z) + (x,(xy)z) - t^2 (x,x)  - t (xx,y)  - ((xx)y,z) \]
        \item $D25$ 
        \[\partial_3(x,y,z)=st(y,z) + s(yz,x) + (x,(yz)x) - t^2(x,y)  - t (xy,z)  - ((xy)z,x) \]
    \end{enumerate}
    
\item[(3)] $LG1^R$ Quasigroups

    \begin{enumerate}
        \item $F14$
        \[\partial_3(x,y,z)=s^2(z, z) + s(y, (z z)) + (x, y (z z)) - st (y, z)     -
  t(x, (y z))   -(x (y z), z)  \]
        \item $D14$
        \[ \partial_3(x,y,z)=s^2 (z, x) + s(y, z x) + (x, y (z x)) - st(y, z)  -
  t (x, y z)     -(x (y z), x)    \]
    \end{enumerate}

\item[(4)] $RG2^L$ Quasigroups

    \begin{enumerate}
        \item $A23$
        \[ \partial_3(x,y,z)=st (x, y)  + s(x y, z) + (x, (x y) z)  - s(y, z)  - 
 t(x, x)  -(x x, y z) \]
    \end{enumerate}

\item[(5)] $LG2^R$ Quasigroups

    \begin{enumerate}
        \item $F34$
        \[ \partial_3(x,y,z)= t(x, y) + s(z, z) + (x y, z z)- st (y, z)  - 
 t(x, y z)   - (x (y z), z)     \]
    \end{enumerate}

\item[(6)] $RG3^L$ Quasigroups

    \begin{enumerate}
        \item $B25$
                \[ \partial_3(x,y,z)= st (y, x)+ s(y x, z)  + (x, (y x) z)-t^2 (x, y)  - 
 (x y, x) t  - ((x y) x, z)  \]
    \end{enumerate}

\item[(7)] $LG3^R$ Quasigroups

    \begin{enumerate}
        \item $E14$
 \[ \partial_3(x,y,z)= s^2(z, y) + s(y, z y)+ (x, y (z y)) - st (y, z)  -
  t(x, y z)     - (x (y z), y)   \]
    \end{enumerate}

\item[(8)] $EQ^2$ Quasigroups:
            
    \begin{enumerate}
        \item $D15$
        \[ \partial_3(x,y,z)=  s^2 (z, x)+ s(y, z x) + (x, y (z x)) - t^2 (x, y)    -
  t(x y, z)  -((x y) z, x)   \]
        \item $B23$
        \[ \partial_3(x,y,z)=st (y, x)+ s (y x, z) + (x, (y x) z) - 
 t (x, y)  - s(x, z)   -(x y, x z)     \]
        \item $E34$
        \[ \partial_3(x,y,z)= t(x, y) + s (z, y)  + (x y, z y) - st (y, z) - 
 t (x, y z)    - (x (y z), y)   \]
    \end{enumerate}

\item[(9)]  $MQ^2$ Quasigroups :

    \begin{enumerate}
        \item $D34$
        \[ \partial_3(x,y,z)=  t(x, y) + s (z, x) + (x y, z x) - st (y, z) - 
 t (x, y z)   - (x (y z), x)     \]
        \item $B15$
        \[ \partial_3(x,y,z)= s^2 (x, z) + s(y, x z) + (x, y (x z))   - t^2 (x, y)  -
  t (x y, x)  - ((x y) x, z)   \]
        \item $D23$
        \[ \partial_3(x,y,z)=st (y, z) + s(y z, x)  + (x, (y z) x) - s (z, x)   -  t(x, y)  -(x y, z x) \]
        \item $E15$
        \[ \partial_3(x,y,z)= s^2 (z, y) + s (y, z y) + (x, y (z y)) - t^2 (x, y)   -
  t (x y, z)   -((x y) z, y)     \]
    \end{enumerate}

\item[(10)] Left Bol Quasigroups ($LBQ^R$):
    
    \begin{enumerate}
        \item $B14$
        \[ \partial_3(x,y,z)= s^2 (x, z) + s (y, x z) + (x, y (x z)) - st (y, x)  - t (x, y x)   - (x (y x), z)     \]
    \end{enumerate}

    \item[(11)] Right Bol Quasigroups ($RBQ^L$):

    \begin{enumerate}
        \item $E25$
        \[ \partial_3(x,y,z)= st (y, z) + s (y z, y) + (x, (y z) y) - t^2 (x, y)   -  t (x y, z)    - ((x y) z, y) \]
    \end{enumerate}

\item[(12)] C Quasigroups ($CQ^0$):

    \begin{enumerate}
        \item $C15$
        \[ \partial_3(x,y,z)=s^2 (y, z)  + s (y, y z) + (x, y (y z)) - t^2 (x, y)  -   t (x y, y)    - ((x y) y, z)  \]
    \end{enumerate}

\item[(13)] $LC1^2$ Quasigroups:

    \begin{enumerate}
        \item $A34$
        \[ \partial_3(x,y,z)= t(x, x) + s(y, z) + (x x, y z) - st(x, y) - 
 (x, x y) t   -(x (x y), z)    \]
    \end{enumerate}

\item[(14)] $LC2^0$ Quasigroups:

    \begin{enumerate}
        \item $A14$
        \[ \partial_3(x,y,z)=s^2  (y, z) + s(x, y z) + (x, x (y z)) - st (x, y)  -
  t (x, x y)  - (x (x y), z)   \]
    \end{enumerate}

\item[(15)] $LC3^L$ Quasigroups:

    \begin{enumerate}
        \item $A15$
        \[ \partial_3(x,y,z)= s^2 (y, z)  + s (x, y z) + (x, x (y z)) - t^2 (x, x)  -
  t (x x, y)  - ((x x) y, z)   \]
    \end{enumerate}

\item[(16)] $LC4^R$ Quasigroups:

    \begin{enumerate}
        \item $C14$
        \[ \partial_3(x,y,z)= s^2 (y, z) + s (y, y z) + (x, y (y z)) - st (y, y)- t (x, y y)  - (x (y y), z)     \]
    \end{enumerate}

\item[(17)] $RC1^2$ Quasigroups:

    \begin{enumerate}
        \item $F23$
        \[ \partial_3(x,y,z)=  st (y, z) + s (y z, z) + (x, (y z) z)  - t(x, y)  - s (z, z)  - (x y, z z).  \]
    \end{enumerate}

\item[(18)] $RC2^0$ Quasigroups:

    \begin{enumerate}
        \item $F25$
        \[ \partial_3(x,y,z)= st (y, z) + s (y z, z) + (x, (y z) z) - t^2 (x, y)  -  t((x y), z)  -((x y) z, z)  \]
    \end{enumerate}

\item[(19)] $RC3^R$ Quasigroups:

    \begin{enumerate}
        \item $F15$
        \[ \partial_3(x,y,z)= s^2 (z, z) + s (y, z z) + (x, y (z z)) - t^2 (x, y)     -
  t (x y, z)    -((x y) z, z) \]
    \end{enumerate}

\item[(20)] $RC4^L$ Quasigroups:

    \begin{enumerate}
        \item $C25$
        \[ \partial_3(x,y,z)=st (y, y) + s (y y, z) + (x, (y y) z) - t^2 (x, y)   - 
 t (x y, y)  -((x y) y, z)    \]
    \end{enumerate}

\item[(21)] $LAQ^L$ Quasigroups:

    \begin{enumerate}
        \item $A13$
        \[ \partial_3(x,y,z)=  s^2 (y, z) + s (x, y z) + (x, x (y z)) - t(x, x)   - s (y, z)  -(x x, y z)    \]
        \item $A45$
        \[ \partial_3(x,y,z)= st (x, y) + 
 t (x, x y)  + (x (x y), z) - t^2 (x, x) - t(x x, y)    -((x x) y, z) \]
        \item $C12$
        \[ \partial_3(x,y,z)=  s^2 (y, z) + 
 s (y, y z) + (x, y (y z)) - st(y, y)  - s (y y, z)     - (x, (y y) z)  \]
    \end{enumerate}

\item[(22)] $RAQ^R$ Quasigroups:

    \begin{enumerate}
        \item $C45$
        \[ \partial_3(x,y,z)=st (y, y)  + 
 t (x, y y) + (x (y y), z)  - t^2 (x, y) - t(x y, y)    -((x y) y, z) \]
        \item $F12$
        \[ \partial_3(x,y,z)=s^2 (z, z) + s (y, z z) + (x, y (z z)) - st (y, z)  - s (y z, z)     - (x, (y z) z) \]
        \item $F35$
        \[ \partial_3(x,y,z)= s (z, z) + t (x, y) + (x y, z z)- t^2 (x, y)    - 
 t (x y, z)    - ((x y) z, z)   \]
    \end{enumerate}

\item[(23)] $FQ^0$ Quasigroups:

    \begin{enumerate}
        \item $B45$
        \[ \partial_3(x,y,z)=  st (y, x) + 
 t (x, y x) + (x (y x), z) - t^2 (x, y)   - t(x y, x)   -((x y) x, z)    \]
        \item $D24$
     \[ \partial_3(x,y,z)=  s (y z, x)  + (x, (y z) x) - t (x, y z)   -(x (y z), x) \]
        \item $E12$
        \[ \partial_3(x,y,z)= s^2 (z, y) + 
 s (y, z y) + (x, y (z y)) - st (y, z) - s (y z, y)     - (x, (y z) y)  \]
    \end{enumerate}

\item[(24)] $LNQ^L$ Quasigroups:

    \begin{enumerate}
        \item $A35$
 
 \[ \partial_3(x,y,z)= t(x,x) +s (y,z) + (x x, y z) -t^2(x,x)  - 
 t (x x, y)  -((x x) y, z)
\]
    \end{enumerate}

\item[(25)] $MNQ^2$ Quasigroups:

    \begin{enumerate}
        \item $C24$
        \[ \partial_3(x,y,z)=  s (y y, z) + (x, (y y) z)  - t (x, y y)    -(x (y y), z) \]
    \end{enumerate}

\item[(26)] $RNQ^R$ Quasigroups:

    \begin{enumerate}
        \item $F13$
        \[ \partial_3(x,y,z)= s^2 (z, z) + s (y, z z)  + (x, y (z z)) - t (x, y)   - s (z, z)    - (x y, z z)  \]
    \end{enumerate}
\end{enumerate}

\begin{remark}\label{rem:id}
    In addition to the quasigroup varieties defined by the identities $Vij$ we can also consider the identity $Xij$ defined from the same rooted binary trees of shapes $i$ and $j$, but using four distinct labels instead of one repeated label. For example, the identity $X14$ is $x_1(x_2(x_3x_4)) = (x_1(x_2x_3))x_4$ obtained from the trees in Figures \ref{F:bracket_1} and \ref{F:bracket_4}. 

    In the same way we have obtained boundary maps $\partial_2$ and $\partial_3$ previously from identities, we obtain from the $X14$ identity $\partial_2(x,y) = tx + sy - xy$ and \[ \partial_3(x,y,z,w) = s^2(z,w) + s(y,zw) + (x, y(zw)) - ts(y,z) - t(x,yz) - (x(yz),w). \]

    Note that any of the identities $Vij$ are satisfied in a quasigroup satisfying $Xij$, and so the variety determined by $Xij$ is a subvariety of each of the varieties determined by the $Aij$ through $Fij$. This means that $Xij$ frequently determines the variety of {\it groups}. For instance, $X12$ implies $A12$ implies associativity, $X13$ implies $B13$ implies associativity, and so on. The only $Xij$ identities that don't determine the variety of groups are $X14$, its dual $X25$, and the self-dual $X15$.


    In the complexes we construct to define homology, the map $f$ taking $(x,y,z)$ to $(x,x,y,z)$ gives a chain map \[ \begin{tikzcd}
        0 \arrow[r] & (C_3;Aij) \arrow[r, "\partial_3"] \arrow[d, "f"] & (C_2;Aij) \arrow[r, "\partial_2"] \arrow[d, "1"] & (C_1;Aij) \arrow[r, "0"] \arrow[d, "1"] & 0 \\
        0 \arrow[r] & (C_3;Xij) \arrow[r, "\partial_3"] & (C_2;Xij) \arrow[r, "\partial_2"] & (C_1;Xij) \arrow[r, "0"] & 0 
    \end{tikzcd} \] inducing a map $1_*: H_2(\blank; Aij) \to H_2(\blank; Xij)$. This map is surjective, so $H_2$ computed with the $\partial_3$ we get from identity X$ij$ is always a quotient of $H_2$ computed with $\partial_3$ coming from identity $Aij$, and similarly for any other identity $Vij$ with a repeated letter. Hence $Xij$ homology is always a common quotient of homology computed with the $Vij$ identities. 
\end{remark}

\section{BMq Chain Complexes and Their Properties}\label{S:chain}

\subsection{Chain complex verification}\label{partialsquared}
In this subsection, we verify that 
\[ 0 \to C_3 \xrightarrow{\partial_3} C_2 \xrightarrow{\partial_2} C_1 \to 0 \]
 is, in fact, a chain complex. We perform the verification of $\partial_2 \partial_3 = 0$ in slightly greater generality than our situation demands.

 Recall from Definition~\ref{def:polynomials} the polynomial $Q$ on a binary rooted tree $T$:

\[ Q(T) = \sum_{v \in V(T) - L(T)} h_v(t,s) g(v) \]

where $g(v) = (g(v_L), g(v_R))$ with 
\[
\bar g(v)=
\left\{
\begin{array}{ll}
x_i & \mbox{when $v\in L(T)$ is decorated by $x_i$},\\
\bar g(v_L)\bar g(v_R) & \mbox{when $v\in V(T)-L(T)$ and $v_L$, $v_R$} \\
& \mbox{are the left/right children of $v$ in $T$}.
\end{array}
\right.
\]
 
This map is a generalization of $\partial_3: C_3 \to C_2$ in that 
$\partial_3(x,y,z) = Q(T_1) - Q(T_2)$ for two trees $T_1, T_2$ with $4$ leaves decorated by the same 4-letter word in $x,y,z$. 

Finally, recall that $i(v), j(v)$ are the number of left (resp. right) turns from the root vertex $r$ to the vertex $v.$ 

In general, we need not assume that $t,s$ commute, but since their commutativity follows in every case \footnote{Commutativity of $t,s$ is immediate for all but 4 cases in which we must assume no zero divisors.}  as in List \ref{list}, we work here with the assumption that they commute for simplicity. 

\begin{lemma}
    For  $r \in V$ the root vertex,
    \[ (\partial_2 \circ Q)(T) = - \bar{g}(r) + \sum_{l \in L(T)} t^{i(l)} s^{j(l)}\bar{g}(l) \]
\end{lemma}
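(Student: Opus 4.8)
The plan is to compute $(\partial_2 \circ Q)(T)$ directly from the definitions by expanding $\partial_2$ on each summand of $Q(T)$ and then re-indexing the resulting sum over the vertices of $T$. Recall $Q(T) = \sum_{v \in V(T)-L(T)} h_v(t,s)\,g(v)$ where $g(v) = (\bar g(v_L), \bar g(v_R))$, and that $\partial_2(a,b) = ta + sb - ab$. Since $\partial_2$ is extended $k$-linearly and $h_v(t,s) \in k$, we get
\[
(\partial_2 \circ Q)(T) = \sum_{v \in V(T)-L(T)} h_v(t,s)\,\bigl( t\,\bar g(v_L) + s\,\bar g(v_R) - \bar g(v_L)\bar g(v_R)\bigr).
\]
Now $\bar g(v_L)\bar g(v_R) = \bar g(v)$ by the recursive definition of $\bar g$, so the last term contributes $-\sum_{v \in V(T)-L(T)} h_v(t,s)\,\bar g(v)$. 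For the first two terms, I would use the recursion for the weights: $h_{v_L}(t,s) = h_v(t,s)\,t$ and $h_{v_R}(t,s) = s\,h_v(t,s) = h_v(t,s)\,s$ (using commutativity of $t,s$). Hence $h_v(t,s)\,t\,\bar g(v_L) = h_{v_L}(t,s)\,\bar g(v_L)$ and $h_v(t,s)\,s\,\bar g(v_R) = h_{v_R}(t,s)\,\bar g(v_R)$, so the first two terms together equal $\sum_{v \in V(T)-L(T)} \bigl( h_{v_L}(t,s)\,\bar g(v_L) + h_{v_R}(t,s)\,\bar g(v_R)\bigr)$.

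The key combinatorial observation is that as $v$ ranges over all non-leaf vertices, the pairs $(v_L, v_R)$ enumerate every non-root vertex of $T$ exactly once: each non-root vertex is the left child of its parent or the right child of its parent, for exactly one parent. Therefore
\[
\sum_{v \in V(T)-L(T)} \bigl( h_{v_L}(t,s)\,\bar g(v_L) + h_{v_R}(t,s)\,\bar g(v_R)\bigr) = \sum_{w \in V(T) - \{r\}} h_w(t,s)\,\bar g(w).
\]
Combining this with the earlier telescoping, the sum over $V(T)-L(T)$ in $-\sum h_v(t,s)\bar g(v)$ cancels against all the non-leaf, non-root terms of $\sum_{w \ne r} h_w(t,s)\bar g(w)$, leaving only $-h_r(t,s)\,\bar g(r) = -\bar g(r)$ (since $h_r(t,s) = 1$) plus the leaf terms $\sum_{l \in L(T)} h_l(t,s)\,\bar g(l)$. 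Finally, $h_l(t,s) = t^{i(l)} s^{j(l)}$ by unwinding the recursion along the path from $r$ to $l$: each left turn multiplies by $t$ and each right turn by $s$, and commutativity lets us collect these into $t^{i(l)}s^{j(l)}$. This yields the claimed formula.

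I expect the main obstacle to be bookkeeping rather than conceptual: making the "each non-root vertex appears exactly once as a child" re-indexing precise, and tracking that the cancellation leaves exactly the root term and the leaf terms with the correct signs and coefficients. A clean way to present this is to split $V(T) = \{r\} \sqcup (V(T)-L(T)-\{r\}) \sqcup L(T)$ and verify the coefficient of $\bar g(w)$ in $(\partial_2 \circ Q)(T)$ for $w$ in each of the three pieces separately: for $w = r$ the coefficient is $-1$; for an internal non-root $w$ it is $h_w - h_w = 0$; and for a leaf $l$ it is $h_l = t^{i(l)}s^{j(l)}$. One should also note explicitly where commutativity of $t$ and $s$ is used (in identifying $h_{v_R} = h_v s$ and in collecting $t^{i(l)}s^{j(l)}$), consistent with the standing assumption flagged before the lemma statement.
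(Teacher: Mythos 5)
Your proposal is correct and follows essentially the same route as the paper: expand $\partial_2$ on each summand of $Q(T)$, use $h_{v_L}=h_v t$ and $h_{v_R}=h_v s$, and observe that the internal non-root terms telescope, leaving $-\bar g(r)$ and the leaf contributions $h_l(t,s)\bar g(l)=t^{i(l)}s^{j(l)}\bar g(l)$. Your global re-indexing over children (each non-root vertex is a child of exactly one parent) is just a cleaner packaging of the paper's local pairwise cancellation, so there is nothing to add.
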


\begin{proof}
    For the sake of clarity, we note that $\bar g (v) \in X$, $g(v) \in X^2$, and $\bar g = x_i$ for some $x_i \in W$.

By definition, 

\begin{align*}
    (\partial_2 \circ Q)(T) & = \sum_{v \in V(T)-L(T)} t^{i(v)}s^{j(v)} \partial_2(g(v)) \\
    & = \sum_{v \in V(T)-L(T)} t^{i(v)}s^{j(v)} (t \bar g(v_L) + s \bar g (v_R) - \bar g(v_L) * \bar g(v_R)) \\
     & = \sum_{v \in V(T)-L(T)} t^{i(v) +1}s^{j(v)} \bar g(v_L) + t^{i(v)}s^{j(v)+1} \bar g (v_R) - t^{i(v)}s^{j(v)}  \bar g(v) \\
\end{align*}

Assume for $v \in V(T)- L(T)$ that $v_L \in V_L$; i.e. the left child of $v$ is also a degree three vertex.

Then 
\[ \partial_2(g(v)) = {\color{blue} t^{i(v) +1}s^{j(v)} \bar g(v_L)} + {\color{red} t^{i(v)}s^{j(v)+1} \bar g (v_R) } - t^{i(v)}s^{j(v)}  \bar g(v)\]

 and 
 
\[ \partial_2(g(v_L)) = t^{i(v) +2}s^{j(v)} \bar g(v_{LL}) + t^{i(v)+1}s^{j(v)+1} \bar g (v_{LR}) {\color{blue} - t^{i(v)+1}s^{j(v)}  \bar g(v_L) } \]
 have a pair of cancelling terms in blue. Similarly the term in red will cancel with the final term in $\partial_2(g(v_R))$ so long as $v_R \in V(T)- L(T)$.

If a vertex $v \in V(T)-L(T)$ has a child which is not leaf, say $g(v_L) = x_i$, we produce a non-cancelling term  $t^{i(v_L)} s^{j(v_L)} \bar g(v_L)$. 

Finally, we note that the final term in $\partial_2(g(r))$ (namely, $-\bar{g}(r)$), will not cancel. Thus we arrive at the statement.
\end{proof}

With this lemma, we can quickly conclude,

\begin{corollary}

    For $T_1, T_2$ two trees with respective roots $r_1,r_2$ and the $\ell = |L(T_1)| = |L(T_2)|$ leaves decorated by the same elements of a quasigroup $X$, 
    \[ \partial_2( Q(T_1) - Q(T_2) ) = - \bar g(r_1) + \bar g(r_2) + \sum_{i = 1}^\ell (t^{\epsilon_1} s^{\delta_1} - t^{\epsilon_2} s^{\delta_2}) x_i\] for $\epsilon_i, \delta_j \in \mathbb{N}$.
\end{corollary}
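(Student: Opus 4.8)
The plan is to obtain the Corollary as an immediate consequence of the preceding Lemma, applied once to $T_1$ and once to $T_2$, together with the $k$-linearity of $\partial_2$.

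First I would write down the Lemma for each tree. For $T_1$ with root $r_1$ it gives
\[ (\partial_2 \circ Q)(T_1) = -\bar g(r_1) + \sum_{l \in L(T_1)} t^{i(l)} s^{j(l)} \bar g(l), \]
and likewise for $T_2$ with root $r_2$. Since $Q(T_1) - Q(T_2) \in C_3$ and $\partial_2 \colon C_3 \to C_2$ is a homomorphism, subtracting these two identities yields
\[ \partial_2\bigl(Q(T_1) - Q(T_2)\bigr) = -\bar g(r_1) + \bar g(r_2) + \sum_{l \in L(T_1)} t^{i(l)} s^{j(l)} \bar g(l) - \sum_{l \in L(T_2)} t^{i(l)} s^{j(l)} \bar g(l). \]

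Next I would reconcile the two leaf-sums. By hypothesis $T_1$ and $T_2$ each have $\ell$ leaves, and reading left to right they are decorated by the same word; write $x_i$ for the letter at the $i$-th leaf position. Pairing the $i$-th leaf of $T_1$ with the $i$-th leaf of $T_2$ and setting $\epsilon_1, \delta_1$ to be the numbers of left/right turns from $r_1$ to the $i$-th leaf of $T_1$, and $\epsilon_2, \delta_2$ the analogous counts in $T_2$, the two sums combine termwise into $\sum_{i=1}^{\ell} (t^{\epsilon_1} s^{\delta_1} - t^{\epsilon_2} s^{\delta_2}) x_i$. Because $i(l)$ and $j(l)$ count turns along a root-to-leaf path, each is a nonnegative integer, so $\epsilon_1, \delta_1, \epsilon_2, \delta_2 \in \mathbb{N}$; this is exactly the stated formula.

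The only point needing care — the ``main obstacle'', such as it is — is the leaf-matching bookkeeping: since a Bol--Moufang word always repeats a variable, several leaves of a given tree carry the same letter, so one must be explicit that the sum in the Corollary ranges over the $\ell$ leaf positions (not over distinct variables), and that it is the left-to-right decoration \emph{function}, rather than merely the multiset of letters, that is assumed common to $T_1$ and $T_2$. Once this indexing convention is fixed, the Corollary is nothing more than the termwise difference of the two instances of the Lemma, so no further computation is required.
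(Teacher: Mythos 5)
Your proposal is correct and follows exactly the route the paper intends: the Corollary is obtained by applying the Lemma to $T_1$ and $T_2$ separately, subtracting by linearity of $\partial_2$, and pairing the $i$-th leaves of the two trees (which carry the same letter $x_i$) so that the two leaf-sums combine termwise. Your added remark that the sum ranges over leaf positions rather than distinct variables is a sensible clarification of the same argument, not a deviation from it.
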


In the specific case of $T_1,T_2$ 
with four leaves each decorated by $x,y,z$, $-\bar g (r_1) + \bar g (r_2)$ is precisely the Bol-Moufang Relation corresponding to the choice of trees and thus is assumed to be zero. Further, for $s=t=1,$ the sum $\sum_{i = 1}^{\ell} (t^{\epsilon_1} s^{\delta_1} - t^{\epsilon_2} s^{\delta_2}) x_i$ is clearly zero; in Section \ref{sec:substitutions}, we have found the full set of solutions $(t,s)$ for which this sum is zero by analyzing the structure of the affine BM quasigroup with the same relation.  Thus, $\partial_2(Q(T_1) - Q(T_2)) = \partial_2 \circ \partial_3 = 0$ in all cases where $(t,s)$ are solutions for the affine case with a given 
equation.  More precisely, we observe that $\partial_2(Q(T))\stackrel{x_i\mapsto a_i}{=}H(T)-rel(T)$ thus $\partial_2(Q(T_1)) - \partial_2(Q(T_2))\stackrel{x_i \mapsto a_i}{=} H(T_1)-H(T_2)\equiv 0$ by our analysis of affine quasigroups; in particular, this is always zero for $t=s=1$.

\subsection{First homology as abelianization}\label{S:H1Ab}

In group homology, we have that $H_1(G)$ is the abelianization $G^{ab}$. For any Bol-Moufang quasigroup $X$, the substitution $t = s = 1$ is always available and $H_1(X;1,1)$ has a similar interpretation as an abelianization in that any quasigroup homomorphism $f: X \to A$ for an abelian group $A$ factors through a unique group homomorphism $H_1(X;1,1) \to A$:

\begin{proposition}
    Let $X$ be a quasigroup. Then the functor $F$ taking $X$ to $\Z X/(a + b - ab)$ is left adjoint to the forgetful functor $U$ taking an abelian group to its underlying quasigroup. 
\end{proposition}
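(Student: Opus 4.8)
The statement is a standard adjunction, so I would prove it by exhibiting the natural bijection
\[
\Hom_{\mathbf{Ab}}(F(X), A) \;\cong\; \Hom_{\mathbf{Qgp}}(X, U(A))
\]
for every quasigroup $X$ and every abelian group $A$, and checking naturality in both variables. First I would set up notation: $F(X) = \Z X / N$ where $N$ is the subgroup generated by all elements $a + b - (a\cdot b)$ for $a,b \in X$ (here $a\cdot b$ denotes the product in $X$, viewed as a generator of $\Z X$), and $U(A)$ is $A$ equipped with its subtraction-free quasigroup operation, i.e. $a *_U b = a + b$ with $a \backslash b = b - a$ and $a / b = a - b$; one checks (IL),(SL),(IR),(SR) trivially, so $U(A)$ really is an object of the variety of quasigroups. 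Note also that $F$ is genuinely a functor: a quasigroup homomorphism $g : X \to Y$ extends $\Z$-linearly to $\Z X \to \Z Y$ and sends $a+b-(a\cdot b)$ to $g(a)+g(b)-(g(a)\cdot g(b))\in N_Y$, hence descends to $F(X)\to F(Y)$.

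**Key steps.** (1) Given a quasigroup homomorphism $f : X \to U(A)$, extend it $\Z$-linearly to $\tilde f : \Z X \to A$; since $f(a\cdot b) = f(a) + f(b)$ in $A$, the generator $a+b-(a\cdot b)$ lies in $\ker \tilde f$, so $N \subseteq \ker \tilde f$ and $\tilde f$ factors through a unique group homomorphism $\bar f : F(X) \to A$. (2) Conversely, given a group homomorphism $h : F(X) \to A$, precompose with the canonical map $\eta_X : X \to F(X)$ sending $a$ to the class of the generator $a$; I must check $\eta_X$ is a quasigroup homomorphism $X \to U(F(X))$, which is exactly the defining relation $\overline{a\cdot b} = \bar a + \bar b$ modulo $N$ together with the observation that divisions are then forced: $\overline{a\backslash b} = \bar b - \bar a$ and $\overline{a/b} = \bar a - \bar b$ because these are the unique solutions of the relevant equations in the quasigroup $U(F(X))$. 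Then $U(h)\circ \eta_X : X \to U(A)$ is a quasigroup homomorphism. (3) The two assignments $f \mapsto \bar f$ and $h \mapsto U(h)\circ\eta_X$ are mutually inverse: one direction is $\bar f \circ \eta_X = f$ by construction, and the other is $\overline{U(h)\circ\eta_X} = h$ because both are group homomorphisms out of $F(X)$ agreeing on the generating set $\{\bar a : a \in X\}$. (4) Naturality: for a quasigroup homomorphism $X' \to X$ and a group homomorphism $A \to A'$, chase the relevant square; this is routine since everything is defined on generators.

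**Main obstacle.** None of the individual steps is deep; the only place requiring care is step (2) — verifying that $\eta_X$ is a morphism in the variety of quasigroups (not just a map respecting $\cdot$), i.e. that it also respects $\backslash$ and $/$. The point to articulate cleanly is that in any algebra of the variety of quasigroups the divisions are term-definable as the unique solutions $L(a)^{-1}(b)$ and $R(b)^{-1}(a)$ (as recalled after Definition~\ref{Def:Qugp}), so a map that preserves $\cdot$ and is a bijection-compatible map automatically preserves $\backslash, /$; more simply, $\eta_X$ preserves all three operations because in $U(F(X))$ we have $\bar a \backslash \bar b = \bar b - \bar a = \overline{a \backslash b}$, the last equality holding since applying the relation gives $\bar a + \overline{a\backslash b} = \overline{a \cdot (a\backslash b)} = \bar b$. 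Alternatively, and perhaps more cleanly for exposition, I would phrase the whole proof as: $U$ is monadic (or at least, the variety of quasigroups embeds in $\Z$-modules-with-extra-structure) and $F$ is the obvious candidate, then just verify the universal property of $\eta_X$ directly as above, which is the shortest route and sidesteps any categorical machinery.
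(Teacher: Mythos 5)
Your proposal is correct and follows essentially the same route as the paper's proof: both establish the adjunction by extending a quasigroup homomorphism $X \to U(A)$ $\Z$-linearly over the free abelian group $\Z X$, observing that the relations $a+b-ab$ die in $A$, and identifying the resulting bijection as precomposition with the unit $x \mapsto [x]$. The only difference is cosmetic --- you explicitly check that the unit also preserves $\backslash$ and $/$ (which the paper leaves implicit, it being automatic since a $\cdot$-preserving map between quasigroups preserves the divisions by uniqueness of solutions).
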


\begin{proof}
    We have a map $i_X : X \to \Z X / (a + b - ab)$ given by $x \mapsto [x]$, and for any $f: \Z X / (a + b - ab) \to A$ for an abelian group $A$, the composition $fi_X$ is a quasigroup homomorphism: $fi_X(xy) = f([xy]) = f([x] + [y]) = fi_X(x) + fi_X(y)$, so we have the map $i_X^*: \Hom(FX,A) \to \Hom(X,UA)$. Now if $g: X \to UA$ is a quasigroup map with $A$ an abelian group, then there is a unique group homomorphism $\varphi: \Z X \to A$ with $g = \varphi i_X$ since $\Z X$ is the free abelian group on $X$ (that is, $\varphi$ is $g$ extended by linearity and $\varphi \iota_X = g$ for $\iota_X$ the inclusion $X \hookrightarrow \Z X$). Therefore $\varphi(a + b - ab) = g(a) + g(b) - g(ab) = 0 \in A$ because $g$ is a quasigroup homomorphism, and so $\varphi$ induces a unique map on the quotient, say $\tilde{\varphi}: FX \to A$ with $\varphi = \tilde{\varphi} \pi$ for $\pi: \Z X \to FX$ the projection. This means $\tilde{\varphi}$ is the unique map with $g = \tilde{\varphi} \pi \iota_X = \tilde{\varphi} i_X$. That is, for every $g \in \Hom(X, UA)$, there is a unique $\tilde{\varphi} \in \Hom(FX, A)$ with $g = i_X^*(\tilde{\varphi})$, and $i_X^*$ is therefore a bijection. 

    For $\alpha: X \to X'$, $F\alpha: FX \to FX'$ is $\alpha$ extended by linearity, meaning $F\alpha$ is the unique group homomorphism with $F\alpha i_X = i_X' \alpha$, and this gives naturality in $X$. For $\beta: A \to B$, $\beta_*$ commutes with $i_X^*$, which gives naturality in $A$. Hence the bijection $\Hom(FX,A) \to \Hom(X, UA)$ is natural. 
\end{proof}

Hence we are justified in writing $H_1(X;1,1) = X^{ab}$. When available, we can interpret the substitutions $t = 1$, $s = -1$ and $t = -1$, $s = 1$ similarly as abelianizations of certain related quasigroups called the {\it parastrophes} or {\it conjugates}. These are the quasigroups with underlying set $X$ and operations \begin{align*}
    x / y &= z \iff z \cdot y = x \\
    x \backslash y &= z \iff x \cdot z = y \\
    x \circ y &= z \iff y \cdot x = z \\
    x \dslash y &= z \iff z \cdot x = y \\
    x \dbslash y &= z \iff y \cdot z = x.
\end{align*} Writing $(X,\cdot)$ as an abbreviation for $(X, \cdot, /, \backslash)$, we similarly write, e.g., $(X, /)$ as an abreviation for the parastrophe $(X, /, \cdot, \dbslash)$ and $(X, \backslash)$ for $(X, \backslash, \dslash, \cdot)$. 

 More information on parastrophes of quasigroups can be found in \cite[Section II.2]{Pfl2}, \cite[Section 1.2.2]{Shch} or \cite[Section 1.3]{Smi2}.

\begin{proposition}\label{prop:parastrophes}
    For $X$ a quasigroup, $H_1(X;1,-1) \cong (X,/)^{ab}$ and similarly, $H_1(X;-1,1) \cong (X,\backslash)^{ab}$. 
\end{proposition}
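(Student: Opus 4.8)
The plan is to read off a presentation of $H_1(X;t,s)$ directly from $\partial_2$, and then match it with the presentation of the abelianization of a parastrophe supplied by the preceding proposition; the whole argument is a change of variables in the generating relations.

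First I would observe that since the complex terminates with $C_0 = 0$, we have $H_1(X;t,s) = C_1/\operatorname{im}\partial_2$, and as $\partial_2$ is $\Z$-linear and $X^2$ is a basis of $C_2$, taking $\Z$ as the coefficient ring (legitimate because $t,s=\pm1$ are units in $\Z$) gives $H_1(X;t,s) = \Z X/\langle tx+sy-xy : x,y\in X\rangle$. Specializing, $H_1(X;1,-1) = \Z X/\langle x-y-xy : x,y\in X\rangle$ and $H_1(X;-1,1) = \Z X/\langle -x+y-xy : x,y\in X\rangle$. On the other hand, applying the previous proposition to the parastrophe $(X,/,\cdot,\dbslash)$, whose multiplication is $/$, identifies $(X,/)^{ab}$ with $\Z X/\langle a+b-(a/b) : a,b\in X\rangle$, and likewise $(X,\backslash)^{ab} = \Z X/\langle a+b-(a\backslash b) : a,b\in X\rangle$.

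The key step is the reparametrization of the generating sets. For the first case, write $c = a/b$, so that $a = c\cdot b$ because $a/b$ is the unique solution of $z\cdot b = a$; as $(a,b)$ ranges over $X\times X$, so does $(c,b)$, since $a\mapsto a/b = R(b)^{-1}(a)$ is a bijection for each fixed $b$. Hence $\langle a+b-(a/b) : a,b\rangle = \langle cb+b-c : b,c\rangle = \langle x-y-xy : x,y\rangle$, the last equality because $cb+b-c = -(c-b-cb)$ and a submodule is closed under negation. This yields $H_1(X;1,-1)\cong (X,/)^{ab}$. The second case is parallel: write $c = a\backslash b$, so $b = a\cdot c$, and now $(a,c)$ ranges over $X\times X$ as $(a,b)$ does, using that $b\mapsto a\backslash b = L(a)^{-1}(b)$ is a bijection; then $\langle a+b-(a\backslash b) : a,b\rangle = \langle a+ac-c : a,c\rangle = \langle -x+y-xy : x,y\rangle$ since $a+ac-c = -(-a+c-ac)$, giving $H_1(X;-1,1)\cong (X,\backslash)^{ab}$.

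I do not expect a serious obstacle: this is a bookkeeping exercise once the correct substitution is chosen, and the only points demanding care are keeping the coefficient ring fixed at $\Z$, choosing the right translation bijection ($R(b)^{-1}$ versus $L(a)^{-1}$), and tracking signs. The single conceptual ingredient — that the abelianization of a quasigroup equals that particular quotient of the free abelian group — is already established in the proposition just above and may simply be invoked, here for the parastrophes $(X,/)$ and $(X,\backslash)$ in place of $(X,\cdot)$.
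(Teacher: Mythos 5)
Your proposal is correct and matches the paper's argument in substance: both identify $H_1(X;1,-1)$ and $(X,/)^{ab}$ as quotients of $\Z X$ by the same relation submodule, using the substitution $a=(a/b)\cdot b$ (resp.\ $b=a\cdot(a\backslash b)$), so that the identity on $\Z X$ descends to the isomorphism. The paper phrases this as mutual derivability of the two families of relations, while you phrase it as a sign-reversing bijection between the generating sets; these are the same computation.
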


\begin{proof}
    We show that $H_1(X, \cdot;1,-1) \cong H_1(X,/;1,1) = (X,/)^{ab}$. We claim that the identity map on $\Z X$ induces an isomorphism \[ H_1(X,\cdot;1,-1) = \frac{\Z X}{a - b - a\cdot b} \stackrel{1_{\Z X}}{\longrightarrow} \frac{\Z X}{a + b - a/b} = H_1(X,/;1,1), \] and for this we check that the $a + b - a/b$ relations are consequences of the $a - b - a\cdot b$ relations, and vice versa. In $\Z X$, we have that $(a + b) - b = a$, and also that $(a/b) \cdot b = a$. Therefore in $\Z X / (a - b - a \cdot b)$, \[ (a + b) - b = a = (a/b) \cdot b = (a/b) - b \] implying that $a + b = a/b$. Similarly in $\Z X / (a + b - a/b)$ we have \[ (a - b) + b = a = (a \cdot b) / b = (a \cdot b) + b \] so that $a - b = a\cdot b$. Hence the identity map on $\Z X$ descends to well-defined maps on the quotients and $H_1(X, \cdot; 1, -1) \cong H_1(X, /; 1,1) = (X,/)^{ab}$. The argument that $H_1(X, \cdot; -1,1) \cong (X, \backslash)^{ab}$ is similar. 
\end{proof}

Note also that $\Z X/(a + b - ba) = \Z X/(b + a - ba)$, so $(X,\circ)^{ab} = (X,\cdot)^{ab}$; similarly $(X,\dslash)^{ab} = (X,/)^{ab}$ and $(X,\dbslash)^{ab} = (X,\backslash)^{ab}$. This says that $H_1$ for any of the parastrophes of $X$ is $H_1(X;t,s)$ with appropriate $(t,s)$ substitution.

It is not true that $H_1(X;t,s)$ is always the abelianization of some parastrophe, even if $t$ and $s$ are required to be invertible, for this leaves the case of $t = s = -1$, a substitution that is available in any of the CQ, LC2, RC2, or FQ varieties. 

\section{Examples}\label{S:examples}

The following is a collection of homology groups computed for the Bol-Moufang Quasigroups presented in \cite{PhVo1, PhVo2}. 

Given that $\partial_2(x,y) := t x + sy - xy$ and $\partial_1 = 0$, it is clear that the first homology for a quasigroup $X$ depends only on the choice of $t$ and $s$. Thus we will denote $H_1(X;t,s) = \ker \partial_1/\operatorname{Im}\partial_2$ for the given substitution of $t,s$. Each quasigroup below satisfies one or more non-equivalent  defining relations of Bol-Moufang type:  $Vij$ for $V \in \{A,B,C,D,E,F\}$ and $i,j \in \{1,2,3,4,5\}$. Each relation $Vij$ comes equipped with a set of solutions for $t,s$, presented in List  \ref{list}. Thus, for each quasigroup $X$, there will be as many first homology group computations as there are solutions $(t,s)$ in the union of all solutions sets for the satisfied relations.

In the case of the second homology, $\partial_3$ is defined with respect to a given Bol-Moufang relation. Thus we denote $H_2(X;Vij,t,s) = \ker \partial_2/\operatorname{Im}\partial_3^{Vij}$ where $\partial_3^{Vij}$ is the boundary map defined with respect to relation $Vij.$ Consistent with Conjecture~\ref{conj:identity-independence}, our data supports  $H_2(X,Vij,t,s) = H_2(X,Wk\ell,t,s)$ if $Vij$ and $Wk\ell$  define the same variety of Bol-Moufang quasigroup. Thus, we show only one computation $H_2(X,Vij,t,s)$ for $Vij$ a given representative of a variety.

\begin{example}

The following quasigroup satisfies $A25$, $A23$, $B25$, $E25$, $A14$, $F25$, $C25$, $A35$:
   \[ A_1 = \left(
\begin{array}{cccc}
 0 & 1 & 2 & 3 \\
 2 & 0 & 3 & 1 \\
 1 & 3 & 0 & 2 \\
 3 & 2 & 1 & 0 \\
\end{array}
\right) \]

\bigskip

$H_1(A_1;t,s):$\footnote{Note that this quasigroup is isomorphic to the quasigroup $(\Z/4\Z, \backslash)$ via the isomorphism swapping 2 and 3, i.e. the $ij$ entry of the table is $j - i$ mod 4, with the transposition (2\, 3) applied. Hence Proposition~\ref{prop:parastrophes} implies its first homology with $(t,s) = (-1,1)$ is $H_1(\Z/4\Z;1,1) = \Z/4\Z$, the usual group homology.}

\begin{center}
    
\begin{tabular}{c|c}
    $(t,s)$ & $H_1(A_1;t,s)$  \\
    \hline
    $(1,1)$ & $\Z/2\Z$ \\
    $(1,-1)$ & $\Z/4\Z$ \\
    $(-1,1)$ & $\Z/4\Z$ \\
    $(-1,-1)$ & $\Z/6\Z$ \\
\end{tabular}
\end{center}
\bigskip

$H_2(A_1;Vij,t,s):$

\begin{center}
    
\begin{tabular}{c|c|c}
    $Vij$ & $(t,s)$ & $H_2(A_1;t,s)$  \\
    \hline

    $A25$ & $(1,1)$ & $\Z/2\Z$ \\
    & $(-1,1)$ & $\Z/2\Z$ \\
    $A23$ & $(1,1)$ & $\Z/2\Z$ \\
     & $(-1,1)$ & $\Z/4\Z$ \\
        $B25$ & $(1,1)$ & $\Z \oplus \Z/4\Z$ \\
        & $(-1,1)$ & $(\Z/2\Z)^2$ \\
        $E25$ & $(1,1)$ & $\Z/2\Z$ \\
         & $(-1,1)$ & $\Z/2\Z$ \\
        $A14$ & $(1,1)$ & $\Z \oplus (\Z/2\Z)^2$ \\
         & $(1,-1)$ & $\Z \oplus (\Z/2\Z)^2$ \\  
         & $(-1,-1)$ & $\Z \oplus (\Z/2\Z)^2$ \\
        $F25$ & $(1,1)$ & $\Z^6 \oplus (\Z/2\Z)^2$ \\
         & $(-1,1)$ & $\Z^6 \oplus (\Z/2\Z)^2$ \\     
         & $(-1,-1)$ & $\Z^6 \oplus (\Z/2\Z)^2$ \\  
        $C25$ & $(1,1)$ & $\Z^3 \oplus (\Z/2\Z)^2$ \\
         & $(-1,1)$ & $\Z^3 \oplus (\Z/2\Z)^2$ \\
        $A35$ & $(1,1)$ & $\Z^9$ \\
         & $(-1,1)$ & $\Z^6 \oplus (\Z/2\Z)^3$ \\ 
\end{tabular}

\end{center}

\end{example}

\begin{example}
The following quasigroup satisfies 
$C15$, $A14$, $F25$, $B45$:
   \[ A_2 = \left(
\begin{array}{ccc}
 0 & 2 & 1 \\
 2 & 1 & 0 \\
 1 & 0 & 2 \\
\end{array}
\right) \]

\bigskip

$H_1(A_2;t,s):$

\begin{center}
    
\begin{tabular}{c|c}
    $(t,s)$ & $H_1(A_2;t,s)$  \\
    \hline
    $(1,1)$ & $0$ \\
    $(1,-1)$ & $0$ \\
    $(-1,1)$ & $0$ \\
    $(-1,-1)$ & $(\Z/3\Z)^2$ \\
\end{tabular}
\end{center}
\bigskip

$H_2(A_2;Vij,t,s):$

\begin{center}
    
\begin{tabular}{c|c|c}
    $Vij$ & $(t,s)$ & $H_2(A_2;t,s)$  \\
    \hline

    $C15$ & $(1,1)$ & $\Z$ \\
        & $(-1,-1)$ & $\Z$ \\
    $A14$ & $(1,1)$ & $\Z^3$ \\
        & $(1,-1)$ & $\Z^3$ \\
        & $(-1,-1)$ & $\Z^3$ \\
    $F25$ & $(1,1)$ & $\Z^3$ \\
        & $(-1,1)$ & $\Z^3$ \\
        & $(-1,-1)$ & $\Z^3$ \\
    $B45$ & $(1,1)$ & $\Z^3 \oplus \Z/2\Z$ \\
        & $(-1,-1)$ & $\Z^4$ \\
\end{tabular}

\end{center}

\end{example}

\begin{example}
The following quasigroup satisfies 
$F14$, $F34$, $E14$, $B14$, $A14$, $C14$, $F13$:
   \[ A_3 = \left( \begin{array}{ccc}
 0 & 2 & 1 \\
 1 & 0 & 2 \\
 2 & 1 & 0 \\
\end{array} \right)\]

\bigskip

$H_1(A_3;t,s):$\footnote{This quasigroup is $A_3 = (\Z/3\Z, /)$, i.e. $\Z/3Z$ with operation $xy = x - y$, so $H_1(A_3;1,-1) = \Z/3\Z$ (the usual group homology) again in agreement with Proposition~\ref{prop:parastrophes}.}

\begin{center}
    
\begin{tabular}{c|c}
    $(t,s)$ & $H_1(A_3;t,s)$  \\
    \hline
    $(1,1)$ & $0$ \\
    $(1,-1)$ & $\Z/3\Z$ \\
    $(-1,-1)$ & $\Z/3\Z$ \\
\end{tabular}
\end{center}
\bigskip

$H_2(A_3;Vij,t,s):$\footnote{This quasigroup also satisfies the X14 identity (note we list that it satisfies A14, B14, C14, E14, and F14, with D14 defining the same variety as F14), so we can compute its X14 homology $H_2 \cong 0$ to be the trivial group for $(t,s) = (1,1)$ and $(t,s) = (1,-1)$. This is consistent with Remark~\ref{rem:id}, as the only common quotient of $H_2(A_3;t,s)$ computed with respect to the various $Vij$ identities is the trivial group.}

\begin{center}
    
\begin{tabular}{c|c|c}
    $Vij$ & $(t,s)$ & $H_2(A_3;t,s)$  \\
    \hline

    $F14$ & $(1,1)$ & $0$ \\
        & $(1,-1)$ & $0$ \\
    $F34$ & $(1,1)$ & $0$ \\
        & $(1,-1)$ & $\Z^3$ \\
    $E14$ & $(1,1)$ & $0$ \\
        & $(1,-1)$ & $0$ \\
    $B14$ & $(1,1)$ & $0$ \\
        & $(1,-1)$ & $0$ \\
    $A14$ & $(1,1)$ & $\Z^3$ \\
        & $(1,-1)$ & $\Z^3$ \\
        & $(-1,-1)$ & $\Z^3$ \\
    $C14$ & $(1,1)$ & $\Z$ \\
        & $(1,-1)$ & $\Z$ \\
    $F13$ & $(1,1)$ & $\Z^4$ \\
        & $(1,-1)$ & $\Z^2 \oplus \Z/2\Z$ \\
\end{tabular}

\end{center}

\end{example}

\begin{example}
The following quasigroup satisfies 
$A23$, $A15$:
   \[ A_4 = \left( \begin{array}{ccccc}
 0 & 1 & 2 & 3 & 4 \\
 2 & 3 & 1 & 4 & 0 \\
 3 & 0 & 4 & 2 & 1 \\
 4 & 2 & 0 & 1 & 3 \\
 1 & 4 & 3 & 0 & 2 \\
\end{array} \right) \]

\bigskip

$H_1(A_4;t,s):$

\begin{center}
    
\begin{tabular}{c|c}
    $(t,s)$ & $H_1(A_4;t,s)$  \\
    \hline
    $(1,1)$ & $0$ \\
    $(-1,1)$ & $0$ \\
    $(-2,1)$ & $\Z/10\Z$ \\
\end{tabular}
\end{center}
\bigskip

$H_2(A_4;Vij,t,s):$

\begin{center}
    
\begin{tabular}{c|c|c}
    $Vij$ & $(t,s)$ & $H_2(A_4;t,s)$  \\
    \hline
    $A15$ & $(1,1)$ & $0$ \\
        & $(-2,1)$ & $0$ \\
    $A23$ & $(1,1)$ & $0$ \\
        & $(-1,1)$ & $0$ \\
\end{tabular}
\end{center}

\end{example}

\begin{example}
The following quasigroup satisfies 
$A13$:
   \[ A_5 =\left(
\begin{array}{cccccc}
 0 & 1 & 2 & 3 & 4 & 5 \\
 2 & 3 & 4 & 0 & 5 & 1 \\
 1 & 4 & 3 & 5 & 0 & 2 \\
 4 & 0 & 5 & 2 & 1 & 3 \\
 3 & 5 & 0 & 1 & 2 & 4 \\
 5 & 2 & 1 & 4 & 3 & 0 \\
\end{array}
\right)\]

\bigskip

$H_1(A_5;t,s):$

\begin{center}
    
\begin{tabular}{c|c}
    $(t,s)$ & $H_1(A_5;t,s)$  \\
    \hline
    $(1,1)$ & $\Z/3\Z$ \\
\end{tabular}
\end{center}
\bigskip

$H_2(A_5;Vij,t,s):$

\begin{center}
    
\begin{tabular}{c|c|c}
    $Vij$ & $(t,s)$ & $H_2(A_5;t,s)$  \\
    \hline

    $A13$ & $(1,1)$ & $\Z^5$ \\
\end{tabular}

\end{center}

\end{example}

\begin{example}
The following quasigroup satisfies 
$F14$, $F34$, $E14$, $B14$, $A14$, $C14$, $F13$:
   \[ A_6 = \left(
\begin{array}{cccccc}
 0 & 2 & 1 & 3 & 5 & 4 \\
 1 & 4 & 0 & 5 & 3 & 2 \\
 2 & 0 & 5 & 4 & 1 & 3 \\
 3 & 5 & 4 & 0 & 2 & 1 \\
 4 & 1 & 3 & 2 & 0 & 5 \\
 5 & 3 & 2 & 1 & 4 & 0 \\
\end{array}
\right) \]

\bigskip

$H_1(A_6;t,s):$

\begin{center}
    
\begin{tabular}{c|c}
    $(t,s)$ & $H_1(A_6;t,s)$  \\
    \hline
    $(1,1)$ & $\Z/2\Z$ \\
    $(1,-1)$ & $\Z/2\Z$ \\
    $(-1,-1)$ & $\Z/6\Z$ \\
\end{tabular}
\end{center}
\bigskip

$H_2(A_6;Vij,t,s):$\footnote{Again we have an example of a quasigroup satisfying the X14 identity (see Remark \ref{rem:id}), and this time its X14 homology is $H_2 \cong \Z/2\Z$ with $(t,s) = (1,1)$ and with $(t,s) = (1,-1)$. The same comment applies to Example 5.7, and in both cases we note that $\Z/2\Z$ is a common quotient of all other $H_2(A_6)$ groups computed with respect to the various $Vij$ identities.}

\begin{center}
    
\begin{tabular}{c|c|c}
    $Vij$ & $(t,s)$ & $H_2(A_6;t,s)$  \\
    \hline

    $F14$ & $(1,1)$ & $\Z/2\Z$ \\
        & $(1,-1)$ & $\Z/2\Z$ \\
    $E14$ & $(1,1)$ & $\Z \oplus \Z/3\Z$ \\
        & $(1,-1)$ & $\Z/2\Z$ \\
    $B14$ & $(1,1)$ & $\Z/6\Z$ \\
        & $(1,-1)$ & $\Z/6\Z$ \\
    $A14$ & $(1,1)$ & $\Z \oplus \Z/2\Z$ \\
        & $(1,-1)$ & $\Z \oplus \Z/2\Z$ \\
        & $(-1,-1)$ & $\Z \oplus \Z/2\Z$ \\
    $C14$ & $(1,1)$ & $\Z/2\Z$ \\
        & $(1,-1)$ & $\Z/2\Z$ \\

\end{tabular}

\end{center}

\end{example}

\begin{example}
The following quasigroup satisfies 
$F14$, $E14$, $B14$, $A14$, $C14$:
   \[ A_7 = \left(
\begin{array}{cccccc}
 0 & 3 & 4 & 1 & 2 & 5 \\
 1 & 2 & 5 & 0 & 3 & 4 \\
 2 & 1 & 0 & 5 & 4 & 3 \\
 3 & 0 & 1 & 4 & 5 & 2 \\
 4 & 5 & 2 & 3 & 0 & 1 \\
 5 & 4 & 3 & 2 & 1 & 0 \\
\end{array}
\right)\]

\bigskip

$H_1(A_7;t,s):$

\begin{center}
    
\begin{tabular}{c|c}
    $(t,s)$ & $H_1(A_7;t,s)$  \\
    \hline
    $(1,1)$ & $\Z/2\Z$ \\
    $(1,-1)$ & $\Z/2\Z$ \\
    $(-1,-1)$ & $\Z/6\Z$ \\
\end{tabular}
\end{center}
\bigskip

$H_2(A_3;Vij,t,s):$

\begin{center}
    
\begin{tabular}{c|c|c}
    $Vij$ & $(t,s)$ & $H_2(A_7;t,s)$  \\
    \hline

    $F14$ & $(1,1)$ & $\Z/2\Z$ \\
        & $(1,-1)$ & $\Z/2\Z$ \\
    $E14$ & $(1,1)$ & $\Z \oplus \Z/3\Z$ \\
        & $(1,-1)$ & $\Z/2\Z$ \\
    $B14$ & $(1,1)$ & $\Z/6\Z$ \\
        & $(1,-1)$ & $\Z/6\Z$ \\
    $A14$ & $(1,1)$ & $\Z \oplus \Z/2\Z$ \\
        & $(1,-1)$ & $\Z \oplus \Z/2\Z$ \\
        & $(-1,-1)$ & $\Z \oplus \Z/2\Z$ \\
    $C14$ & $(1,1)$ & $\Z/2\Z$ \\
        & $(1,-1)$ & $\Z/2\Z$ \\
\end{tabular}
\end{center}
\end{example}

\begin{example}
The following quasigroup satisfies 
$A14$, $C14$:
   \[ A_8 = \left( \begin{array}{cccccc}
    0 & 1 & 3 & 2 & 5 & 4 \\
    1 & 5 & 0 & 4 & 2 & 3 \\
    2 & 0 & 4 & 5 & 3 & 1 \\
    3 & 4 & 5 & 0 & 1 & 2 \\
    4 & 2 & 1 & 3 & 0 & 5 \\
    5 & 3 & 2 & 1 & 4 & 0 \\
\end{array} \right)\]

\bigskip

$H_1(A_8;t,s):$

\begin{center}
    
\begin{tabular}{c|c}
    $(t,s)$ & $H_1(A_8;t,s)$  \\
    \hline
    $(1,1)$ & $\Z/2\Z$ \\
    $(1,-1)$ & $\Z/2\Z$ \\
    $(-1,-1)$ & $\Z/6\Z$ \\
\end{tabular}
\end{center}
\bigskip

$H_2(A_8;Vij,t,s):$

\begin{center}
    
\begin{tabular}{c|c|c}
    $Vij$ & $(t,s)$ & $H_2(A_8;t,s)$  \\
    \hline

    $A14$ & $(1,1)$ & $\Z \oplus \Z/2\Z$ \\
        & $(1,-1)$ & $\Z \oplus \Z/2\Z$ \\
        & $(-1,-1)$ & $\Z \oplus \Z/2\Z$ \\
    $C14$ & $(1,1)$ & $\Z/2\Z$ \\
        & $(1,-1)$ & $\Z/2\Z$ \\
\end{tabular}

\end{center}

\end{example}

\begin{example}
The following quasigroup satisfies 
$E14$, $F25$, $F13$:
   \[ A_9 = \left(
\begin{array}{cccccc}
 0 & 2 & 3 & 1 & 4 & 5 \\
 1 & 0 & 4 & 5 & 2 & 3 \\
 2 & 4 & 5 & 0 & 3 & 1 \\
 3 & 5 & 0 & 4 & 1 & 2 \\
 4 & 3 & 1 & 2 & 5 & 0 \\
 5 & 1 & 2 & 3 & 0 & 4 \\
\end{array}
\right)\]

\bigskip

$H_1(A_9;t,s):$

\begin{center}
    
\begin{tabular}{c|c}
    $(t,s)$ & $H_1(A_9;t,s)$  \\
    \hline
    $(1,1)$ & $\Z/2\Z$ \\
    $(1,-1)$ & $\Z/2\Z$ \\
    $(-1,1)$ & $\Z/2\Z$ \\
    $(-1,-1)$ & $\Z/3\Z$ \\
\end{tabular}
\end{center}
\bigskip

$H_2(A_9;Vij,t,s):$

\begin{center}
    
\begin{tabular}{c|c|c}
    $Vij$ & $(t,s)$ & $H_2(A_9;t,s)$  \\
    \hline

    $E14$ & $(1,1)$ & $\Z$ \\
        & $(1,-1)$ & $\Z/2\Z$ \\
    $F25$ & $(1,1)$ & $\Z \oplus \Z/2\Z$ \\
        & $(-1,1)$ & $\Z \oplus \Z/6\Z$ \\
        & $(-1,-1)$ & $\Z \oplus \Z/2\Z$ \\
    $F13$ & $(1,1)$ & $\Z^5$ \\
        & $(1,-1)$ & $\Z/2\Z \oplus (\Z/2\Z)^3$ \\
\end{tabular}

\end{center}

\end{example}

\begin{example}
The following quasigroup satisfies 
$F34$, $F25$:
   \[ A_{10} = \left(
\begin{array}{cccc}
 0 & 2 & 3 & 1 \\
 1 & 3 & 2 & 0 \\
 2 & 0 & 1 & 3 \\
 3 & 1 & 0 & 2 \\
\end{array}
\right)\]

\bigskip

$H_1(A_{10};t,s):$

\begin{center}
    
\begin{tabular}{c|c}
    $(t,s)$ & $H_1(A_{10};t,s)$  \\
    \hline
    $(1,1)$ & $0$ \\
    $(1,-1)$ & $0$ \\
    $(-1,1)$ & $0$ \\
    $(-1,-1)$ & $\Z/3\Z$ \\
\end{tabular}
\end{center}
\bigskip

$H_2(A_{10};Vij,t,s):$

\begin{center}
    
\begin{tabular}{c|c|c}
    $Vij$ & $(t,s)$ & $H_2(A_{10};t,s)$  \\
    \hline

    $F34$ & $(1,1)$ & $0$ \\
        & $(1,-1)$ & $0$ \\
    $F25$ & $(1,1)$ & $\Z^6 \oplus (\Z/2\Z)^3$ \\
        & $(-1,1)$ & $\Z^6 \oplus (\Z/2\Z)^3$ \\
        & $(-1,-1)$ & $\Z^6 \oplus (\Z/2\Z)^3$ \\
\end{tabular}
\end{center}

\end{example}

\begin{example}
The following quasigroup satisfies 
$F34$, $F25$:
   \[ A_{11} = \left(
\begin{array}{cccc}
 0 & 3 & 1 & 2 \\
 1 & 2 & 0 & 3 \\
 2 & 1 & 3 & 0 \\
 3 & 0 & 2 & 1 \\
\end{array}
\right)\]

\bigskip

$H_1(A_{11};t,s):$

\begin{center}
    
\begin{tabular}{c|c}
    $(t,s)$ & $H_1(A_{11};t,s)$  \\
    \hline
    $(1,1)$ & $0$ \\
    $(1,-1)$ & $0$ \\
    $(-1,1)$ & $0$ \\
    $(-1,1)$ & $\Z/3\Z$ \\
\end{tabular}
\end{center}
\bigskip

$H_2(A_{11};Vij,t,s):$

\begin{center}
    
\begin{tabular}{c|c|c}
    $Vij$ & $(t,s)$ & $H_2(A_{11};t,s)$  \\
    \hline

    $F34$ & $(1,1)$ & $0$ \\
        & $(1,-1)$ & $0$ \\
    $F25$ & $(1,1)$ & $\Z^6 \oplus (\Z/2\Z)^3$ \\
          & $(-1,1)$ & $\Z^6 \oplus (\Z/2\Z)^3$ \\
          & $(-1,-1)$ & $\Z^6 \oplus (\Z/2\Z)^3$ \\
\end{tabular}
\end{center}
\end{example}

\begin{example}
The following quasigroup satisfies 
$A23$, $A15$:
   \[ A_{12} = \left(
\begin{array}{ccccc}
 0 & 1 & 2 & 3 & 4 \\
 3 & 2 & 4 & 1 & 0 \\
 4 & 3 & 1 & 0 & 2 \\
 2 & 0 & 3 & 4 & 1 \\
 1 & 4 & 0 & 2 & 3 \\
\end{array}
\right) \]

\bigskip

$H_1(A_{12};t,s):$

\begin{center}
    
\begin{tabular}{c|c}
    $(t,s)$ & $H_1(A_{12};t,s)$  \\
    \hline
    $(1,1)$ & $0$ \\
    $(-1,1)$ & $0$ \\
    $(-2,1)$ & $\Z/10\Z$ \\
\end{tabular}
\end{center}
\bigskip

$H_2(A_{12};Vij,t,s):$

\begin{center}
    
\begin{tabular}{c|c|c}
    $Vij$ & $(t,s)$ & $H_2(A_{12};t,s)$  \\
    \hline

    $A23$ & $(1,1)$ & $0$ \\
        & $(1,-1)$ & $0$ \\
    $A15$ & $(1,1)$ & $0$ \\
    & $(-2,1)$ & $0$ \\
\end{tabular}
\end{center}
\end{example}

\begin{example}
The following quasigroup satisfies 
$B45$, $A35$, $C24$, $F13$:
   \[ A_{13} = \left(
\begin{array}{ccccc}
 0 & 1 & 2 & 3 & 4 \\
 1 & 0 & 3 & 4 & 2 \\
 2 & 4 & 0 & 1 & 3 \\
 3 & 2 & 4 & 0 & 1 \\
 4 & 3 & 1 & 2 & 0 \\
\end{array}
\right)\]

\bigskip

$H_1(A_{13};t,s):$

\begin{center}
    
\begin{tabular}{c|c}
    $(t,s)$ & $H_1(A_{13};t,s)$  \\
    \hline
    $(1,1)$ & $0$ \\
    $(1,-1)$ & $0$ \\
    $(-1,1)$ & $0$ \\
    $(-1,-1)$ & $\Z/3\Z$ \\
\end{tabular}
\end{center}
\bigskip

$H_2(A_{13};Vij,t,s):$

\begin{center}
    
\begin{tabular}{c|c|c}
    $Vij$ & $(t,s)$ & $H_2(A_{13};t,s)$  \\
    \hline

    $B45$ & $(1,1)$ & $\Z^8$ \\
        & $(-1,-1)$ & $\Z^8$ \\
    $A35$ & $(1,1)$ & $\Z^{16}$ \\
        & $(-1,1)$ & $\Z^{12} \oplus (\Z/2\Z)^4$ \\
    $C24$ & $(1,1)$ & $\Z^{12}$ \\
    $F13$ & $(1,1)$ & $\Z^{16}$ \\
        & $(1,-1)$ & $\Z^{12} \oplus (\Z/2\Z)^4$ \\
\end{tabular}

\end{center}

\end{example}

\begin{example}
The following quasigroup satisfies 
$C15$:
   \[ A_{14} = \left(
\begin{array}{cccccccc}
 1 & 3 & 0 & 2 & 7 & 6 & 4 & 5 \\
 4 & 2 & 6 & 0 & 5 & 3 & 7 & 1 \\
 0 & 6 & 2 & 4 & 3 & 5 & 1 & 7 \\
 7 & 5 & 4 & 6 & 1 & 2 & 0 & 3 \\
 2 & 0 & 3 & 1 & 6 & 7 & 5 & 4 \\
 6 & 4 & 5 & 7 & 2 & 1 & 3 & 0 \\
 3 & 7 & 1 & 5 & 0 & 4 & 2 & 6 \\
 5 & 1 & 7 & 3 & 4 & 0 & 6 & 2 \\
\end{array}
\right)\]

\bigskip

$H_1(A_{14};t,s):$

\begin{center}
    
\begin{tabular}{c|c}
    $(t,s)$ & $H_1(A_{14};t,s)$  \\
    \hline
    $(1,1)$ & $\Z/4\Z$ \\
    $(-1,-1)$ & $\Z/6\Z$ \\
\end{tabular}
\end{center}
\bigskip

$H_2(A_{14};Vij,t,s):$

\begin{center}
    
\begin{tabular}{c|c|c}
    $Vij$ & $(t,s)$ & $H_2(A_{14};t,s)$  \\
    \hline

    $C15$ & $(1,1)$ & $\Z/2\Z$ \\
        & $(-1,-1)$ & $\Z/2\Z$ \\
\end{tabular}

\end{center}

\end{example}

\begin{example}
The following quasigroup satisfies 
$C15$, $A34$, $A14$, $A15$, $C14$, $F23$, $F25$, $F15$, $C25$, $A13$, $C45$, $A35$, $C24$, $F13$:
   \[ A_{15} = \left(
\begin{array}{cccccccccccc}
 0 & 1 & 2 & 3 & 4 & 5 & 6 & 7 & 8 & 9 & 10 & 11 \\
 1 & 2 & 0 & 4 & 5 & 3 & 7 & 8 & 6 & 10 & 11 & 9 \\
 2 & 0 & 1 & 5 & 3 & 4 & 8 & 6 & 7 & 11 & 9 & 10 \\
 3 & 4 & 5 & 0 & 1 & 2 & 10 & 11 & 9 & 8 & 6 & 7 \\
 4 & 5 & 3 & 1 & 2 & 0 & 11 & 9 & 10 & 6 & 7 & 8 \\
 5 & 3 & 4 & 2 & 0 & 1 & 9 & 10 & 11 & 7 & 8 & 6 \\
 6 & 7 & 8 & 11 & 9 & 10 & 0 & 1 & 2 & 4 & 5 & 3 \\
 7 & 8 & 6 & 9 & 10 & 11 & 1 & 2 & 0 & 5 & 3 & 4 \\
 8 & 6 & 7 & 10 & 11 & 9 & 2 & 0 & 1 & 3 & 4 & 5 \\
 9 & 10 & 11 & 7 & 8 & 6 & 5 & 3 & 4 & 0 & 1 & 2 \\
 10 & 11 & 9 & 8 & 6 & 7 & 3 & 4 & 5 & 1 & 2 & 0 \\
 11 & 9 & 10 & 6 & 7 & 8 & 4 & 5 & 3 & 2 & 0 & 1 \\
\end{array}
\right)\]

\bigskip

$H_1(A_{15};t,s):$

\begin{center}
    
\begin{tabular}{c|c}
    $(t,s)$ & $H_1(A_{15};t,s)$  \\
    \hline
    $(1,1)$ & $(\Z/2\Z)^2$ \\
    $(1,-1)$ & $(\Z/2\Z)^2$ \\
    $(1,-2)$ & $\Z/2\Z$ \\
    $(-1,1)$ & $(\Z/2\Z)^2$ \\
     $(-2,1)$ & $\Z/2\Z$ \\
     $(-1,-1)$ & $\Z/2\Z \oplus \Z/6\Z$
\end{tabular}
\end{center}
\bigskip

$H_2(A_{15};Vij,t,s):$

\begin{center}
    
\begin{tabular}{c|c|c}
    $Vij$ & $(t,s)$ & $H_2(A_{15};t,s)$  \\
    \hline

    $C15$ & $(1,1)$ & $\Z \oplus (\Z/2\Z)^3$ \\
        & $(-1,-1)$ & $\Z \oplus (\Z/2\Z)^3$ \\
    $A34$ & $(1,1)$ & $\Z^3$ \\
    $A14$ & $(1,1)$ & $\Z^6 \oplus (\Z/2\Z)^3$ \\
    & $(1,-1)$ & $\Z^6 \oplus (\Z/6\Z)^3$ \\
    & $(-1,-1)$ & $\Z^6 \oplus (\Z/2\Z)^3$ \\
    $A15$ & $(1,1)$ & $\Z^3 \oplus \Z/3\Z$ \\ 
    & $(-2,1)$ & $\Z^3 \oplus (\Z/2\Z)^2 \oplus \Z/6\Z$ \\
        
    $C14$ & $(1,1)$ & $\Z/3\Z \oplus (\Z/2\Z)^3$ \\
        & $(1,-1)$ & $\Z/3\Z \oplus (\Z/6\Z)^3$ \\
    $F23$ & $(1,1)$ & $\Z^3$ \\
    $F25$ & $(1,1)$ & $\Z^6 \oplus (\Z/2\Z)^3$ \\
        & $(-1,1)$ & $\Z^6 \oplus (\Z/6\Z)^3$ \\
            & $(-1,-1)$ & $\Z^6 \oplus (\Z/2\Z)^3$ \\
    $F15$ & $(1,1)$ & $\Z^3 \oplus (\Z/2\Z)^3$ \\
        & $(1,-2)$ & $\Z^3 \oplus (\Z/2\Z)^2 \oplus \Z/6\Z$ \\
    $C25$ & $(1,1)$ & $\Z^3 \oplus (\Z/2\Z)^3$ \\
        & $(-1,1)$ & $\Z^3 \oplus (\Z/2\Z)^3$ \\
    $A13$ & $(1,1)$ & $\Z^{28}$ \\
    $C45$ & $(1,1)$ & $\Z^{28}$ \\
    $A35$ & $(1,1)$ & $\Z^{33}$ \\
    & $(-1,1)$ & $\Z^{24} \oplus (\Z/2\Z)^9$ \\
    $C24$ & $(1,1)$ & $\Z^{30}$ \\
    $F13$ & $(1,1)$ & $\Z^{33}$ \\ 
    & $(1,-1)$ & $\Z^{24} \oplus (\Z/2\Z)^9$ \\
\end{tabular}
\end{center}

\end{example}

\begin{example}
The following quasigroup satisfies 
$B14$, $A34$, $A14$, $A15$, $C14$, $A13$, $A35$, $C24$, $F13$:
   \[ A_{16} = \left(
\begin{array}{cccccccc}
 0 & 1 & 2 & 3 & 4 & 5 & 6 & 7 \\
 1 & 0 & 3 & 2 & 5 & 4 & 7 & 6 \\
 2 & 3 & 0 & 1 & 6 & 7 & 4 & 5 \\
 3 & 5 & 1 & 7 & 0 & 6 & 2 & 4 \\
 4 & 2 & 6 & 0 & 7 & 1 & 5 & 3 \\
 5 & 4 & 7 & 6 & 1 & 0 & 3 & 2 \\
 6 & 7 & 4 & 5 & 2 & 3 & 0 & 1 \\
 7 & 6 & 5 & 4 & 3 & 2 & 1 & 0 \\
\end{array}
\right)\]

\bigskip

$H_1(A_{16};t,s):$

\begin{center}
    
\begin{tabular}{c|c}
    $(t,s)$ & $H_1(A_{16};t,s)$  \\
    \hline
    $(1,1)$ & $(\Z/2\Z)^2$ \\
    $(1,-1)$ & $(\Z/2\Z)^2$ \\
    $(-1,1)$ & $(\Z/2\Z)^2$ \\
    $(-1,-1)$ & $\Z/2\Z \oplus \Z/6\Z$ \\
    $(-2,1)$ & $\Z/2\Z$ \\
\end{tabular}
\end{center}
\bigskip

$H_2(A_{16};Vij,t,s):$

\begin{center}
    
\begin{tabular}{c|c|c}
    $Vij$ & $(t,s)$ & $H_2(A_{16};t,s)$  \\
    \hline

    $B14$ & $(1,1)$ & $(\Z/2\Z)^7$ \\
        & $(1,-1)$ & $(\Z/2\Z)^7$ \\
    $A34$ & $(1,1)$ & $\Z^3 \oplus (\Z/2\Z)^2$ \\
    $A14$ & $(1,1)$ & $\Z^6 \oplus (\Z/2\Z)^6$ \\
        & $(1,-1)$ & $\Z^6 \oplus (\Z/2\Z)^6$  \\
        & $(-1,-1)$ & $\Z^6 \oplus (\Z/2\Z)^6$  \\
    $A15$ & $(1,1)$ & $\Z^3 \oplus (\Z/2\Z)^2$ \\
    & $(-2,1)$ & $\Z^3 \oplus (\Z/2\Z)^3$ \\ 
    $C14$ & $(1,1)$ & $\Z/3\Z \oplus (\Z/2\Z)^6$ \\
        & $(1,-1)$ & $\Z/3\Z \oplus (\Z/2\Z)^6$ \\
    $A13$ & $(1,1)$ & $\Z^{17}$ \\
    $A35$ & $(1,1)$ & $\Z^{21}$ \\
        & $(-1,1)$ & $\Z^{15} \oplus (\Z/2\Z)^6$ \\
    $C24$ & $(1,1)$ & $\Z^{18}$ \\
    $F13$ & $(1,1)$ & $\Z^{21}$ \\
    & $(1,-1)$ & $\Z^{15} \oplus (\Z/2\Z)^6$ \\
\end{tabular}

\end{center}

\end{example}

\begin{example}
The following quasigroup satisfies 
$B14$, $A34$, $A14$, $A15$, $C14$, $A13$, $A35$, $C24$:
   \[ A_{17} = \left(
\begin{array}{cccccccccccc}
 0 & 1 & 2 & 3 & 4 & 5 & 6 & 7 & 8 & 9 & 10 & 11 \\
 1 & 0 & 3 & 2 & 5 & 4 & 7 & 6 & 9 & 8 & 11 & 10 \\
 2 & 3 & 0 & 1 & 6 & 7 & 4 & 5 & 10 & 11 & 8 & 9 \\
 3 & 2 & 6 & 7 & 0 & 1 & 10 & 11 & 4 & 5 & 9 & 8 \\
 4 & 5 & 1 & 0 & 8 & 9 & 2 & 3 & 11 & 10 & 6 & 7 \\
 5 & 4 & 8 & 9 & 1 & 0 & 11 & 10 & 2 & 3 & 7 & 6 \\
 6 & 7 & 11 & 10 & 2 & 3 & 8 & 9 & 1 & 0 & 4 & 5 \\
 7 & 6 & 10 & 11 & 3 & 2 & 9 & 8 & 0 & 1 & 5 & 4 \\
 8 & 9 & 5 & 4 & 11 & 10 & 1 & 0 & 7 & 6 & 2 & 3 \\
 9 & 8 & 4 & 5 & 10 & 11 & 0 & 1 & 6 & 7 & 3 & 2 \\
 10 & 11 & 7 & 6 & 9 & 8 & 3 & 2 & 5 & 4 & 0 & 1 \\
 11 & 10 & 9 & 8 & 7 & 6 & 5 & 4 & 3 & 2 & 1 & 0 \\
\end{array}
\right)\]

\bigskip

$H_1(A_{17};t,s):$

\begin{center}
    
\begin{tabular}{c|c}
    $(t,s)$ & $H_1(A_{17};t,s)$  \\
    \hline
    $(1,1)$ & $(\Z/2\Z)^2$ \\
    $(1,-1)$ & $(\Z/2\Z)^2$ \\
    $(-1,1)$ & $(\Z/2\Z)^2$ \\
    $(-1,-1)$ & $\Z/2\Z \oplus \Z/6\Z$ \\
    $(-2,1)$ & $\Z/2\Z$ \\
\end{tabular}
\end{center}
\bigskip

$H_2(A_{17};Vij,t,s):$

\begin{center}
    
\begin{tabular}{c|c|c}
    $Vij$ & $(t,s)$ & $H_2(A_{17};t,s)$  \\
    \hline

    $B14$ & $(1,1)$ & $(\Z/2\Z)^4 \oplus \Z/6 \Z$ \\
        & $(1,-1)$ & $(\Z/2\Z)^5$ \\
    $A34$ & $(1,1)$ & $\Z^3$ \\
    $A14$ & $(1,1)$ & $\Z^6 \oplus (\Z/2\Z)^3$ \\
        & $(1,-1)$ & $\Z^6 \oplus (\Z/2\Z) \oplus (\Z/6\Z)^2$ \\
        & $(-1,-1)$ & $\Z^6 \oplus (\Z/2\Z)^3$ \\
    $A15$ & $(1,1)$ & $\Z^3$ \\
        & $(-2,1)$ & $\Z^3 \oplus (\Z/2\Z)^3$ \\
    $C14$ & $(1,1)$ & $\Z^3 \oplus (\Z/2\Z)^3$ \\
        & $(1,-1)$ & $\Z^3 \oplus \Z/2\Z \oplus (\Z/6\Z)^2$ \\
    $A13$ & $(1,1)$ & $\Z^{36}$ \\
    $A35$ & $(1,1)$ & $\Z^{33}$ \\
        & $(-1,1)$ & $\Z^{24} \oplus (\Z/2\Z)^9$ \\
    $C24$ & $(1,1)$ & $\Z^{30}$ \\
\end{tabular}

\end{center}

\end{example}

\begin{example}
The following quasigroup satisfies 
$A35$:
   \[ A_{18} = \left(
\begin{array}{cccccc}
 0 & 1 & 2 & 3 & 4 & 5 \\
 1 & 5 & 0 & 4 & 3 & 2 \\
 2 & 0 & 4 & 5 & 1 & 3 \\
 3 & 4 & 5 & 0 & 2 & 1 \\
 4 & 2 & 3 & 1 & 5 & 0 \\
 5 & 3 & 1 & 2 & 0 & 4 \\
\end{array}
\right)\]

\bigskip

$H_1(A_{18};t,s):$

\begin{center}
    
\begin{tabular}{c|c}
    $(t,s)$ & $H_1(A_{18};t,s)$  \\
    \hline
    $(1,1)$ & $\Z/2\Z$ \\
    $(-1,1)$ & $\Z/2\Z$ \\
\end{tabular}
\end{center}
\bigskip

$H_2(A_{18};Vij,t,s):$

\begin{center}
    
\begin{tabular}{c|c|c}
    $Vij$ & $(t,s)$ & $H_2(A_{18};t,s)$  \\
    \hline

    $A35$ & $(1,1)$ & $\Z^5$ \\
        & $(-1,1)$ & $\Z^2 \oplus (\Z/2\Z)^3$ \\
    \end{tabular}

\end{center}

\end{example}

\begin{example}
The following quasigroup satisfies 
$C24$:
   \[ A_{19} = \left(
\begin{array}{cccccc}
 0 & 1 & 2 & 3 & 4 & 5 \\
 1 & 2 & 3 & 0 & 5 & 4 \\
 2 & 4 & 5 & 1 & 3 & 0 \\
 3 & 5 & 4 & 2 & 0 & 1 \\
 4 & 0 & 1 & 5 & 2 & 3 \\
 5 & 3 & 0 & 4 & 1 & 2 \\
\end{array}
\right)\]

\bigskip

$H_1(A_{19};t,s):$

\begin{center}
    
\begin{tabular}{c|c}
    $(t,s)$ & $H_1(A_{19};t,s)$  \\
    \hline
    $(1,1)$ & $\Z/2\Z$ \\
\end{tabular}
\end{center}
\bigskip

$H_2(A_3;Vij,t,s):$

\begin{center}
    
\begin{tabular}{c|c|c}
    $Vij$ & $(t,s)$ & $H_2(A_{19};t,s)$  \\
    \hline
    $C24$ & $(1,1)$ & $\Z^4$ \\
\end{tabular}

\end{center}

\end{example}

\section{Summary and speculations}\label{S:summary}

Our work is the first step in building the homology of quasigroups of Bol-Moufang type. We obtain concrete boundary maps $\partial_2$ and $\partial_3$ based on analysis of extensions, which gives a definition of first and second homology (Section~\ref{S:EQR}). We show that affine parameters $t$ and $s$ always commute (Section~\ref{sec:substitutions}) and calculate $H_1$ and $H_2$ for the examples given in \cite{PhVo1} and \cite{PhVo2} (Section~\ref{S:examples}). 

Since $\partial_2(x,y) = tx + sy - xy$ depends only on $t$ and $s$, we see that $H_1(Q;t,s)$ also depends only on $t$ and $s$, and we are able to interpret it in some cases as being an `abelianization' of the quasigroup $Q$ or one of its parastrophes (Section~\ref{S:H1Ab}).

For second homology, there is a dependence on both the choice of substitution $(t,s)$ and on the Bol-Moufang type variety one views the quasigroup as belonging to. We check in our examples that there is no dependence on the particular identity $Vij$ used to define the variety, but have not proven this in general. Thus, we have the following conjecture:

\begin{conjecture}\label{conj:identity-independence}
    Our Bol-Moufang homology depends only on the variety the quasigroup belongs to, along with the choice of $(t,s)$ substitution. That is, if $Vij$ and $Wk\ell$ define the same variety of quasigroups, then for any quasigroup $X$ in that variety, $H_2(X;Vij,t,s) = H_2(X;Wk\ell,t,s)$ for all $(t,s)$. Moreover, when $X$ is a group, any identity $Vij$ defining the variety of groups gives the usual group homology of $X$. 
\end{conjecture}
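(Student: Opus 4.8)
The statement to prove is Conjecture~\ref{conj:identity-independence}, so I will describe a plan of attack rather than a complete argument, since the authors themselves leave it open.

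\textbf{Overall strategy.} The plan is to split the conjecture into two separate assertions and attack them with different tools. The second assertion---that when $X$ is a group, any identity $Vij$ defining the variety of groups recovers the usual group homology---is the more tractable one, and I would dispose of it first. The first assertion---independence of the chosen defining identity $Vij$ within a fixed variety---is the genuine obstacle, and I expect it requires a chain-homotopy argument.

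\textbf{Step 1: the group case.} Suppose $Vij$ is one of the identities that defines the variety of groups (for instance $A12$, which is $x(x(yz)) = ((xx)y)z$ reducing to associativity, or any associativity-equivalent identity in Table~\ref{T:id}). I would take $t=s=1$, so $\partial_2(x,y) = x + y - xy$ and $\partial_3(x,y,z) = Q(Vi) - Q(Vj)$. The idea is to exhibit an explicit chain isomorphism, or at least a quasi-isomorphism in degrees $\leq 2$, between the truncated complex $0 \to C_3 \xrightarrow{\partial_3} C_2 \xrightarrow{\partial_2} C_1 \to 0$ and the low-degree part of the bar complex computing group homology $H_*(X;\Z)$ (with trivial coefficients). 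For $H_1$ this is already done: Section~\ref{S:H1Ab} shows $H_1(X;1,1) = X^{ab}$, which for a group is the usual $H_1$. For $H_2$ one compares $\ker\partial_2 / \operatorname{Im}\partial_3$ with $H_2$ of a group as computed from the bar resolution. Concretely, $\partial_2(x,y) = x+y-xy$ is exactly the degree-$2$ boundary in the (normalized, trivial-coefficient) bar complex, so $\ker\partial_2$ agrees; the content is that $\operatorname{Im}\partial_3$ for an associativity-type identity equals the image of the degree-$3$ bar boundary $\partial^{bar}_3(x,y,z) = (y,z) - (xy,z) + (x,yz) - (x,y)$. Since $Q(Vi) - Q(Vj)$ for the two bracketings of a group word is, after imposing associativity, a telescoping combination of such associator terms, this should come down to a direct verification that the associator $\partial_3$'s generate the same subgroup of $C_2$ as $\partial^{bar}_3$. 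I do not expect this to be hard but it does require care about which associativity-equivalent $Vij$ is chosen.

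\textbf{Step 2: identity-independence within a variety (the main obstacle).} Fix a variety $\mathcal{V}$ of Bol-Moufang type and two identities $Vij$, $Wk\ell$ both defining $\mathcal{V}$; fix a quasigroup $X \in \mathcal{V}$ and an admissible substitution $(t,s)$. The boundary $\partial_2$ is literally the same for both, so $\ker\partial_2$ is fixed, and everything reduces to the claim $\operatorname{Im}\partial_3^{Vij} = \operatorname{Im}\partial_3^{Wk\ell}$ as submodules of $C_2$. The natural approach is to lift the equivalence of identities, as established in \cite{PhVo1}, to the chain level: the proof in \cite{PhVo1} that $Vij$ and $Wk\ell$ are equivalent proceeds by a finite sequence of substitutions of variables and applications of the quasigroup axioms and of the identity itself. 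I would try to show each such elementary step induces an equality (or at least an inclusion, then symmetrize) between the corresponding images $\operatorname{Im}\partial_3$, by tracking how $Q(T)$ transforms under (a) relabelling leaves, (b) applying a known identity inside a subtree, which replaces one subtree by another with the same value in $X$, and (c) using the cancellation laws. Point (b) is where one uses that $\bar g(v)$ only matters through its value in $X$, so that $Q(Vi) - Q(Vj)$ and $Q(Vi') - Q(Vj')$ differ by an element already in $\operatorname{Im}\partial_3$ of some auxiliary relation that holds in $\mathcal{V}$. Equivalently, one wants: for \emph{any} identity $\alpha = \beta$ that is a consequence of $Vij$ within the quasigroup axioms, $Q(\alpha) - Q(\beta) \in \operatorname{Im}\partial_3^{Vij}$ (with the understanding that $Q$ of a word built using left/right divisions is defined by extending the tree formalism to the three-operation signature). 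This is the crux. The hard part will be handling consequences that use the division operations $\backslash, /$, since the tree/$Q$ formalism in Definition~\ref{def:polynomials} is set up only for the multiplication $\cdot$; one would need to first extend $Q$ to words in all three operations in a way compatible with $\partial_2$, presumably using $a\backslash b \leftrightarrow$ the relation $\partial_2$ forces and the expressions for division in the affine case, and then show closure of $\operatorname{Im}\partial_3$ under the resulting moves. Because the equational deductions in \cite{PhVo1} can be long, making this uniform over all $26$ varieties is the real difficulty, and it is presumably why the authors leave it as a conjecture; a case-by-case check assisted by the experimental data is the pragmatic fallback.
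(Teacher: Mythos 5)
There is nothing in the paper to compare your proposal against: the statement you were given is stated as a conjecture, and the paper offers no proof of it. The only support the authors provide is (i) the machine computations of $H_2$ for the distinguishing examples in Section~5, where every quasigroup satisfying two identities $Vij$, $Wk\ell$ of the same variety yields equal $H_2$ for each admissible $(t,s)$, and (ii) the partial structural observation of Remark~4.1 (\ref{rem:id}), which produces a chain map from the $Vij$ complex to the $Xij$ complex and hence only a \emph{quotient} relation among the homologies, not an equality. Your submission is, accordingly, a research plan rather than a proof, and you are candid about that; so the honest verdict is that neither you nor the paper proves the statement.

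As to the plan itself: your reduction of the first assertion to $\operatorname{Im}\partial_3^{Vij}=\operatorname{Im}\partial_3^{Wk\ell}$ is correct and worth stating, since $\partial_2$ (hence $\ker\partial_2$) is manifestly identity-independent; note in passing that the paper's displayed formula $H_2=\operatorname{Im}\partial_3/\ker\partial_2$ in Section~3.4 is a typo and you have the right quotient. Your Step~1 is more tractable than you perhaps realize but for a reason you should make explicit: for the identities that define the variety of groups (e.g.\ $B13$, $D13$, $F24$), the two bracketings differ at a \emph{single} internal node, so at $t=s=1$ the difference $Q(Vi)-Q(Vj)$ telescopes to exactly one bar-complex associator $\,(g_2,g_3)-(g_1g_2,g_3)+(g_1,g_2g_3)-(g_1,g_2)$ evaluated at a triple whose constrained entry (such as $w=xz$) still sweeps out the whole group by the quasigroup property; one must then check all group-defining identities case by case, and beware identities where the two trees differ at more than one node, for which no such telescoping occurs and the image comparison is genuinely open. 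Your Step~2 correctly identifies the crux --- lifting the equational derivations of \cite{PhVo1} to the chain level --- and also its principal obstruction, namely that those derivations use the division operations $\backslash$ and $/$ while $Q$ and $\partial_3$ are defined only on trees built from $\cdot$; extending the complex to the full three-operation signature in a way compatible with $\partial_2$ is a missing construction, not a routine step, and until it is supplied the inclusion $\operatorname{Im}\partial_3^{Vij}\supseteq\operatorname{Im}\partial_3^{Wk\ell}$ cannot be deduced from the mere equivalence of the identities.
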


If the conjecture is true, then we may speak of, for example, {\it the} RG1 homology of a quasigroup, without specifying whether the homology is computed using identity A25 or D25. 

There is also the question of how our Bol-Moufang homology compares to more general theories. In the following subsection, we describe how one can approach general homology using a definition available in any small category initiated by Watts \cite{Wat} (see also \cite{Lod, PrWan}). This approach leads to a standard definition of group homology, see \cite{Lod}. 
\color{black}

\subsection{Homology of a small category with coefficients in a functor to $k$-modules}

Very generally, we have the following definition of homology of a small category $\mathcal{P}$ (that is, a category in which the collection of objects forms a set). Often we consider categories $\mathcal{P}$ with only one object, for instance in the case of group homology, in which we consider the category with a single object and a morphism for each element of the group, where morphisms compose according to the group operation. 

\begin{definition}\label{NerveHomology}
Let ${\mathcal P}$ be as small category (i.e. objects, $P=Ob({\mathcal P)}$ form a set), and let ${\mathcal F}:{\mathcal P} \to R$-Mod
be functor from ${\mathcal P}$ to the category of modules over a commutative ring $R$.
We call the sequence of objects and functors, $x_0 \stackrel{f_0}{\to} x_1 \stackrel{f_1}{\to} \ldots
\stackrel{f_{n-1}}{\to} x_n$ an $n$-chain  (more formally $n$-chain in the nerve of the category).
We define the chain complex $C_*({\mathcal P},{\mathcal F})$ as follows:
$$ C_n= \bigoplus_{x_0 \stackrel{f_0}{\to} x_1 \stackrel{f_1}{\to} \ldots \stackrel{f_{n-1}}{\to} x_n} {\mathcal F}(x_0)$$
where the sum is taken over all $n$-chains.

The boundary operation $\partial_n: C_n({\mathcal P},{\mathcal F}) \to C_{n-1}({\mathcal P},{\mathcal F})$ is given by:
$$\partial_n(\lambda;x_0\stackrel{f_0}{\to} x_1 \stackrel{f_1}{\to} \ldots \stackrel{f_{n-1}}{\to} x_n)=
({\mathcal F}(x_0\stackrel{f_0}{\to} x_1)(\lambda);  x_1 \stackrel{f_1}{\to} \ldots \stackrel{f_{n-1}}{\to} x_n)+$$
$$\sum_{i=1}^n (-1)^i(\lambda;x_0\stackrel{f_0}{\to} x_1 \stackrel{f_1}{\to}\ldots \to
x_{i-1}\stackrel{f_if_{i-1}}{\to} x_{i+1}\to \ldots \stackrel{f_{n-1}}{\to} x_n).$$
We denote by $H_n({\mathcal P},{\mathcal F})$ the homology yielded by the above chain complex.
\end{definition}

We should stress that the above chain complex has the structure of a simplicial module. That is $\partial_n=\sum_{i=0}^n(-1)^id_i$, 
where $$d_0(\lambda;x_0\stackrel{f_0}{\to} x_1 \stackrel{f_1}{\to} \ldots \stackrel{f_{n-1}}{\to} x_n)=
({\mathcal F}(x_0\stackrel{f_0}{\to} x_1)(\lambda);  x_1 \stackrel{f_1}{\to} \ldots \stackrel{f_{n-1}}{\to} x_n).$$
and for $i>0$ we define
$$d_i(\lambda;x_0\stackrel{f_0}{\to} x_1 \stackrel{f_1}{\to} \ldots \stackrel{f_{n-1}}{\to} x_n)=(\lambda;x_0\stackrel{f_0}{\to} x_1 \stackrel{f_1}{\to}\ldots \to
x_{i-1}\stackrel{f_if_{i-1}}{\to} x_{i+1} \ldots \stackrel{f_{n-1}}{\to} x_n).$$ 
Furthermore, the degeneracy map $s_i=s_{i,n}: C_n \to C_{n+1}$ is given by inserting the identity map, that is $$s_i(\lambda;x_0\stackrel{f_0}{\to} x_1 \stackrel{f_1}{\to} \ldots \stackrel{f_{n-1}}{\to} x_n)=$$

$$(\lambda;x_0\stackrel{f_0}{\to} x_1 \stackrel{f_1}{\to} \ldots \stackrel{f_{i-1}}{\to} x_i \stackrel{Id}{\to} x_i \stackrel{f_i}{\to} x_{i+1} \stackrel{f_{i+1}}{\to}  \ldots \stackrel{f_{n-1}}{\to} x_n).$$

For completeness we recall that for any presimplicial module we have naturally defined chain complexes and homology:
\begin{definition} For a presimplicial module ${\mathcal M}$, that is a collection of modules $M_n$, $n\geq 0$, together with maps,
called  maps or face operators,
$$d_i: M_n \to  M_{n-1}, \ \ 0\leq i \leq n, $$
such that:
$$d_id_j = d_{j-1}d_i,\ \ 0\leq i < j \leq n, $$
we define a chain complex with chain groups $M_n$ and a boundary map $\partial_n: M_n \to M_{n-1}$ given by:
$$\partial_n=\sum_{i=0}^n (-1)^i d_i$$
One easily checks that $\partial^2=0$ and thus $({\mathcal M},\partial)$ is a chain complex\footnote{In fact,
$\partial^2= (\sum_{i=0}^{n-1} (-1)^i d_i)(\sum_{i=0}^n (-1)^i d_i)=
 \sum_{0\leq i<j\leq n} ((-1)^{i+j}d_id_j + (-1)^{i+j-1}d_{j-1}d_i)=0$.}
and homology $H_*({\mathcal M})$ can be defined from this chain complex.
\end{definition}

The homology definition here also requires a functor $\mathcal{F}$ to a category of modules. Frequently we take the constant functor $|\Z|$ to the category of abelian groups, for example in the group case, $\mathcal{F}$ takes the single object $\bullet$ of $\mathcal{P}$ to $\Z$ and all morphisms to $1_\Z$. This means the $n$-chains in this case are of the form \[ \begin{tikzcd}
    \bullet \arrow[r, "g_0"] & \bullet \arrow[r, "g_1"] & \cdots \arrow[r, "g_{n - 1}"] & \bullet,
\end{tikzcd} \] so $n$-tuples $(g_0, \dots, g_{n - 1})$ of elements of the group, and the face maps $d_i$ are exactly the usual $d_i(g_0, \dots, g_{n - 1}) = (g_0, \dots, g_{i - 1} g_i, \dots, g_{n - 1})$. 

In setting up an analogous construction for a quasigroup $X$, we can set up a small category with one object, and we have two natural choices for what the morphisms ought to be. First, we can take our one object to be the quasigroup $X$ and the morphisms to be all quasigroup endomorphisms of $X$. 

For example, when $X$ is the quasigroup $A_1$ with operation table \[ A_1 = \left(
\begin{array}{cccc}
 0 & 1 & 2 & 3 \\
 2 & 0 & 3 & 1 \\
 1 & 3 & 0 & 2 \\
 3 & 2 & 1 & 0 \\
\end{array}
\right) \] from Example 5.1, any endomorphism must have $\varphi(0) = 0$, and there are four of these, determined by $\varphi(1)$, so we denote the endomorphism with $1 \mapsto i$ by $\varphi_i$. One checks the morphisms then compose according to \[ \left(
\begin{array}{cccc}
 0 & 0 & 0 & 0 \\
 0 & 1 & 2 & 3 \\
 0 & 2 & 1 & 3 \\
 0 & 3 & 3 & 0 \\
\end{array}
\right) \] so for instance, $\varphi_2 \circ \varphi_3 = \varphi_3$. Thus the small category has one object and four morphisms, setting up the chain complex \[ \begin{tikzcd}
    & \cdots \arrow[r, "\partial_4"] & \Z^{64} \arrow[r, "\partial_3"] & \Z^{16} \arrow[r, "\partial_2"] & \Z^4 \arrow[r, "\partial_1"] & \Z \arrow[r] & 0
\end{tikzcd} \] where we can again think of an $n$-chain as a linear combination of $n$-tuples, this time with the tuple entries $\varphi_i$. Then \begin{align*}
    \partial_1(\varphi_i) &= \text{unique 0-chain} - \text{unique 0-chain} = 0 \\
    \partial_2(\varphi_i, \varphi_j) &= \varphi_j - \varphi_j \circ \varphi_i + \varphi_i \\
    \partial_3(\varphi_i, \varphi_j, \varphi_k) &= (\varphi_j, \varphi_k) - (\varphi_j \circ \varphi_i, \varphi_k) + (\varphi_i, \varphi_k \circ \varphi_j) - (\varphi_i, \varphi_j)
\end{align*} and so on. Thus one computes in the case of $A_1$ that \[ \partial_2 = \begin{pmatrix}
    1 & 0 & 0 & 0 & 0 & 0 & 0 & 0 & 0 & 0 & 0 & 0 & 0 & 0 & 0 & 0 \\
    0 & 1 & 0 & 0 & 1 & 1 & 1 & 1 & 0 & 1 & -1 & 0 & 0 & 1 & 0 & 0 \\
    0 & 0 & 1 & 0 & 0 & 0 & 0 & 0 & 1 & 0 & 2 & 1 & 0 & 0 & 1 & 0 \\
    0 & 0 & 0 & 1 & 0 & 0 & 0 & 0 & 0 & 0 & 0 & 0 & 1 & 0 & 0 & 2
\end{pmatrix} \] so that $H_1$ is trivial. One similarly computes that $H_2$ is also trivial. We note that this does not match any of our Bol-Moufang homology computations for $A_1$, as there we always have a nontrivial $H_1$ in every substitution we consider. 

A second approach uses the notion of the multiplication group of a quasigroup (see the introduction to Section~\ref{S:QBM}). In this case the small category again has a single object $X$, where the morphisms now are the permutations of $X$ as a set coming from $\Mlt(X)$. Composition in the category is then the group operation of $\Mlt(X)$, and we see from the discussion above that this means the homology of this small category is the group homology of $\Mlt(X)$. Again considering the example $A_1$, we have the dihedral group of order 8 as $\Mlt(A_1)$, and so the homology we obtain from this approach is $H_1(D_4) \cong (\Z/2\Z)^2$ and $H_2(D_4) \cong \Z/2\Z$, which again does not match any of our Bol-Moufang homology calculations for $A_1$ in Section~\ref{S:examples}. 

\vspace{1em}

Thus we conclude by inviting the reader to take further steps in building homology for Bol-Moufang quasigroups, perhaps extending the constructions here to define higher boundary maps $\partial_n$, relating them in some way to a more general (co)homology definition (e.g. using the small category approach outlined above, or for a different approach using monads, see \cite{Dus} and \cite{Smi1}), or finding applications, especially in knot theory.

\section{Acknowledgments}
The fourth author was supported by the LAMP Program of the National Research Foundation of Korea, grant No. RS-2023-00301914, and by the MSIT grant No. 2022R1A5A1033624. The fifth author was partially supported by the Simons Collaboration Grant 637794. The last author is grateful for invitation to George Washington University and hospitality of J\'ozef Przytycki and his wife, Teresa.

\appendix

\section{Tables for $H(T_i)-H(T_j)$ and $ Q(T_i) - Q(T_j)$} 

Recall that each Bol-Moufang identity $Xij$ on four (not necessarily distinct) letters are realized by two binary trees, $T_1$ and $T_2$.


List \ref{list} collects polynomials $H(T_i)- H(T_j)$. We recall that 
$\partial_2 \circ \partial_3 = \partial_2(Q(T_1)) - \partial_2(Q(T_2))\stackrel{x_i\mapsto a_i}{=} H(T_1)-H(T_2)$ for some $T_1, T_2$ with leaves decorated by $x_i$. Table 3 shows $H(T_i) - H(T_j)$.

\begin{table}\label{NewHTable}
    \makebox[\textwidth][c]{
      \resizebox{1.2\textwidth}{!}{
       \renewcommand{\arraystretch}{2.5}
          \setlength{\extrarowheight}{2pt}
        \begin{tabular}{|c|c|c|c|c|c|}
        \hline
        &$T_1$ & $T_2$ & $T_3$ & $T_4$ & $T_5$ \\
        \hline
    $T_1$ & 0 & \makecell{ \hspace{2pt} \\ $ st(1-t)a_2 +  s(st-ts)a_3 $ \\ $ +s^2(s-1) a_4$ \\ \hspace{2pt} } & \makecell{ \hspace{2pt} \\ $t(1-t)a_1 + (st-ts)a_2 $ \\ $ +(s-1)st a_3 + s^2(s-1)a_4 $ \\ \hspace{2pt} } & \makecell{ \hspace{2pt} \\ $ t(1-t)a_1 + (1 - t)st a_2  $ \\ $ +(s^2t-ts^2)a_3 + s(s^2 - 1)a_4$ \\ \hspace{2pt} } &  \makecell{ \hspace{2pt} \\ $t(1-t^2)a_1 + (st - t^2s) a_2  $ \\ $+ (s^2t - ts)a_3 + s(s^2 - 1)a_4 $ \\ \hspace{2pt} } \\

    \hline
    $T_2$ & & 0 & \makecell{ \hspace{2pt} \\ $t(1-t)a_1 + (st^2 - ts)a_2 $ \\ $ + st(s - 1)a_3 $ \\ \hspace{2pt} } & \makecell{ \hspace{2pt} \\ $t(1 - t) a_1 + (st^2-tst)a_2  $ \\ $ + (sts-ts^2)a_3 + s(s - 1)a_4$ \\ \hspace{2pt} } & \makecell{ \hspace{2pt} \\ $t(1-t^2)a_1 + (st^2 -t^2 s)a_2  $ \\ $ + (s - 1)ts a_3 +s(s - 1) a_4$ \\ \hspace{2pt} } \\
    \hline
    $T_3$ & & & 0  & \makecell{ \hspace{2pt} \\ $ts(1 - t)a_2 +(st-ts^2)a_3  $ \\ $ +s(s-1)a_4 $ \\ \hspace{2pt} } & \makecell{ \hspace{2pt} \\ $t^2(1 - t)a_1 + t(1 - t) s a_2 $ \\ $ +(st - ts)a_3 + s(s - 1)a_4$ \\ \hspace{2pt} } \\
    \hline
    $T_4$ & & & & 0 & \makecell{ \hspace{2pt} \\ $t^2(1 - t)a_1 + t(st - t s)a_2 $ \\ $ + ts(s - 1)a_3 $ \\ \hspace{2pt} } \\
    \hline
    $T_5$ & & & & & 0 \\
    \hline
        \end{tabular}
      }
    }

    \caption{$H(\text{row tree}) - H(\text{column tree})$ for the five parenthesization trees with leaves labelled by $a_1, a_2, a_3, a_4$. Blank entry $a_{ij}$ is $-a_{ji}$; i.e. the full matrix is skew symmetric.}
    \end{table}

  Table 4 shows $Q(T_1) - Q(T_2)$. We remark that $\partial^{Xij}_3(x,y,z) = Q(T_1) - Q(T_2)$. The list of $\partial_3$ maps is given in Section \ref{S:homologyboundary}.

\begin{table}\label{NewHTable}
    \makebox[\textwidth][c]{
      \resizebox{1.2\textwidth}{!}{
       \renewcommand{\arraystretch}{2.5}
          \setlength{\extrarowheight}{2pt}
        \begin{tabular}{|c|c|c|c|c|c|}
        \hline
        &$T_1$ & $T_2$ & $T_3$ & $T_4$ & $T_5$ \\
        \hline
    $T_1$ & 0 & \makecell{ \hspace{2pt} \\ $(x_1, x_2(x_3 x_4))+ s(x_2, x_3 x_4)$ \\ $+ s^2 (x_3, x_4) - (x_1, (x_2 x_3)x_4) $ \\ $- s(x_2 x_3, x_4) - st(x_2, x_3) $ \\ \hspace{2pt} } & \makecell{ \hspace{2pt} \\ $(x_1, x_2(x_3 x_4))+ s(x_2, x_3 x_4) $ \\ $+ s^2 (x_3, x_4)- (x_1 x_2, x_3 x_4)  $ \\ $- t(x_1, x_2) - s(x_3, x_4)$ \\ \hspace{2pt} } & \makecell{ \hspace{2pt} \\ $(x_1, x_2(x_3 x_4))+ s(x_2, x_3 x_4)  $ \\ $ + s^2 (x_3, x_4) - (x_1(x_2 x_3), x_4)  $ \\ $ - t(x_1, x_2 x_3) - ts(x_2 , x_3)$ \\ \hspace{2pt} } &  \makecell{ \hspace{2pt} \\ $(x_1, x_2(x_3 x_4))+ s(x_2, x_3 x_4)  $ \\ $+ s^2 (x_3, x_4) - ((x_1 x_2) x_3, x_4)  $ \\ $ - t (x_1 x_2, x_3) - t^2 (x_1, x_2)$ \\ \hspace{2pt} } \\

    \hline
    $T_2$ & & 0 & \makecell{ \hspace{2pt} \\ $(x_1, (x_2 x_3)x_4) + s(x_2 x_3, x_4) $ \\ $ +st(x_2, x_3)- (x_1 x_2, x_3 x_4) $ \\ $ - t(x_1, x_2) - s(x_3, x_4)$ \\ \hspace{2pt} } & \makecell{ \hspace{2pt} \\ $(x_1, (x_2 x_3)x_4) + s(x_2 x_3, x_4) $ \\ $ +st(x_2, x_3)  - (x_1(x_2 x_3), x_4) $ \\ $ - t(x_1, x_2 x_3) - ts(x_2 , x_3)$ \\ \hspace{2pt} } & \makecell{ \hspace{2pt} \\ $(x_1, (x_2 x_3)x_4) + s(x_2 x_3, x_4) $ \\ $ +st(x_2, x_3)  -  ((x_1 x_2) x_3, x_4) $ \\ $ - t (x_1 x_2, x_3) - t^2 (x_1, x_2)$ \\ \hspace{2pt} } \\
    \hline
    $T_3$ & & & 0  & \makecell{ \hspace{2pt} \\ $(x_1 x_2, x_3 x_4) + t(x_1, x_2) $ \\ $ + s(x_3, x_4) - (x_1(x_2 x_3), x_4) $ \\ $ -t(x_1, x_2 x_3) - ts(x_2 , x_3)$ \\ \hspace{2pt} } & \makecell{ \hspace{2pt} \\ $ (x_1 x_2, x_3 x_4) + t(x_1, x_2)  $ \\ $ + s(x_3, x_4) - ((x_1 x_2) x_3, x_4) $ \\ $ - t (x_1 x_2, x_3) - t^2 (x_1, x_2) $ \\ \hspace{2pt} } \\
    \hline
    $T_4$ & & & & 0 & \makecell{ \hspace{2pt} \\ $(x_1(x_2 x_3), x_4) + t(x_1, x_2 x_3) $ \\ $ + ts(x_2 , x_3) - ((x_1 x_2) x_3, x_4) $ \\ $ - t (x_1 x_2, x_3) - t^2 (x_1, x_2)$ \\ \hspace{2pt} } \\
    \hline
    $T_5$ & & & & & 0 \\
    \hline
        \end{tabular}
      }
    }

    \caption{$Q(\text{row tree}) - Q(\text{column tree})$ for the five parenthesization trees with leaves labelled by $x_1, x_2, x_3, x_4$. Blank entry $a_{ij}$ is $-a_{ji}$; i.e. the full matrix is skew symmetric.}
    \end{table}

\end{document}